\documentclass{article}
\usepackage[utf8]{inputenc}
\usepackage{amsmath}
\usepackage{amsfonts}
\usepackage{amsthm}
\usepackage{commath}
\usepackage{mathtools}
\usepackage{amssymb}
\usepackage{graphicx}
\usepackage{floatrow}
\usepackage{forest}
\graphicspath{ {./images/} }
\usepackage{float}
\usepackage{titling}
\usepackage{subcaption}
\usepackage[document]{ragged2e}
\usepackage[labelformat=empty]{subcaption}
\usepackage{tabularx}
\usepackage[backend=biber,style=alphabetic]{biblatex}
\usepackage{enumitem}
\usepackage{thmtools}
\usepackage{yfonts}
\newtheorem{theorem}{Theorem}[section]
\newtheorem{lemma}[theorem]{Lemma}
\newtheorem{corollary}[theorem]{Corollary}
\newtheorem*{theoremoneone}{Theorem 1.1}
\newtheorem*{theoremonetwo}{Theorem 1.2}
\newtheorem*{lemmasixone}{Lemma 6.1}
\declaretheoremstyle[bodyfont=\normalfont,spaceabove = \baselineskip, 
spacebelow = \baselineskip]{normalhead}
\declaretheorem[name=Definition,sibling=theorem,style=normalhead]{definition}
\declaretheorem[name=Definition A.1,numbered=no,style=normalhead]{definitionaone}
\setcounter{MaxMatrixCols}{11}
\addbibresource{newrefs.bib}

\title{The First and Second Derivatives of the $q$-Rationals}
\author{Justin Lasker}

\centering
\date{October 18, 2024}

\begin{document}

\begin{titlepage}
\maketitle
\thispagestyle{empty}
\end{titlepage}

\justifying

\section{\textbf{Introduction and the Main Theorem}}
A \textit{quantum rational number}, or $q$\textit{-rational}, $[r]_q$ is a rational function on $q$ which depends on the rational number $r$. Introduced in 2019 by Sophie Morier-Genoud and Valentin Ovsienko \cite{Morier_Genoud_2019}, the quantum rational numbers generalize the concept of quantum integers, or $q$-integers, as formulated by Gauss.
\newline~
\newline
Gauss's original concept of the $q$-integer is a function $\left[n\right]_{q}$ that takes on the value $n$ when $q=1$. They are classically defined by the equation
\begin{equation*}
\left[n\right]_{q}\coloneqq\frac{1-q^n}{1-q}=1+q+q^2+\cdots+q^{n-1}
\end{equation*}
Similarly, the $q$-rational is a function $[a/b]_{q}$ that takes on the value $a/b$ when $q=1$. I will now illustrate Morier-Genoud and Ovsienko's concept of the $q$-rationals through the linear fractional transformation and the quantization map. The linear fractional transformation
\begin{align*}
\begin{pmatrix}
a & c \\
b & d
\end{pmatrix}(z) &\mapsto \frac{az+c}{bz+d} \\
\text{PSL}(2,\mathbb{Z}) &\mapsto \mathbb{Q}\cup\infty
\end{align*}
represents $\text{SL}(2,\mathbb{Z})$-action on the real projective line $\mathbb{Q}\cup\infty$. The group of linear fractional transformations is isomorphic to the projective special linear group $\text{PSL}(2,\mathbb{Z})$, which is generated by the matrices
\[T = \begin{pmatrix}
1 & 1 \\
0 & 1
\end{pmatrix}\text{ and }S = \begin{pmatrix}
0 & -1 \\
1 & 0
\end{pmatrix}\]
In turn, we can use the \textit{$q$-deformed} matrices
\[T_q\begin{pmatrix}
q & 1 \\ 
0 & 1
\end{pmatrix}\text{ and }S_q = \begin{pmatrix}
0 & -q^{-1} \\
1 & 0
\end{pmatrix}\]
to generate the group $\text{PSL}_q(2,\mathbb{Z})$; this group is isomorphic to the group of $\text{SL}_q(2,\mathbb{Z})$-actions on $\mathbb{Q}[q]\cup\infty$ \cite{leclere_morier_genoud}. The $q$-deformation of the rational numbers yields the $q$-rationals, which satisfy the following properties \cite{leclere_morier_genoud}:
\begin{equation*}
\begin{gathered}
[x+1]_q = q[x]_q+1 \\
\left[-\frac{1}{x}\right]_q=-q^{-1}\frac{1}{[x]_q}
\end{gathered}
\end{equation*}
in which I denote the $q$-deformation of $x \in \mathbb{Q}$ as $[x]_q$, a notation which I will use throughout the rest of the paper. The $q$-deformation
\begin{alignat*}{2}
[{}\cdot{}]_q : & \quad & M & \mapsto [M]_q\\
&\quad&\text{PSL}(2,\mathbb{Z}) &\mapsto \text{PSL}_q(2,\mathbb{Z})
\end{alignat*}
such that $[T]_q = T_q$, $[S]_q = S_q$, and $[AB]_q = [A]_q[B]_q$, commutes with the $SL_q(2, \mathbb{Z})$-action on $\mathbb{Q}[q]\cup\infty$ \cite{leclere_morier_genoud}:
\[
\left[\begin{pmatrix}
a & c \\
b & d
\end{pmatrix}(x)\right]_q = \left[\begin{pmatrix}
a & c \\
b & d
\end{pmatrix}\right]_q[x]_q\]
To date, the quantum rational numbers have no known closed-form expression. One strategy for finding such an expression is to calculate the Taylor series of the $q$-rationals at $q = 0$. Morier-Genoud and Ovsienko \cite{Morier_Genoud_2020} give examples of such series for several positive $q$-rationals and $q$\textit{-irrationals}, the latter of which are calculated via limits of $q$-rationals. (Negative $q$-rationals can only be expressed through Laurent series at $q = 0$ \cite{Morier_Genoud_2020}.) However, there has been little to no research to date on whether the $q$-rationals could be expressed through a Taylor series at some $q \neq 0$. The point $q = 1$, where the $q$-rationals equal the rationals, is a natural candidate. 
\newline~
\newline
In this paper, I prove original closed-form expressions for the first and second derivatives of the $q$-rationals at $q=1$, and demonstrate how my proofs share a structure that can be used to prove expressions for derivatives at higher orders as well. The expressions I found are shown below.
\begin{theorem}
Let $x$ be a rational number. The first derivative of its $q$-deformation $[x]_q$ at $q = 1$ is given by
\[
\frac{d}{dq}\left[x\right]_q\Bigr|_{q=1} = \frac{1}{2}\left(x^2 - x + 1 - f(x)^2\right)
\]
in which $f(x)$ is Thomae's function, namely
\[f(x) = \begin{dcases}
      \frac{1}{b} & x = \frac{a}{b},\mathrm{gcd}(a,b)=1 \\
      0 & x \notin \mathbb{Q}
   \end{dcases}
\]
\end{theorem}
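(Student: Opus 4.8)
The plan is to convert the two defining functional relations for the $q$-rationals into recursions for the derivative, verify that the proposed closed form satisfies those recursions together with a base case, and then invoke the transitivity of the action generated by $T$ and $S$ to conclude. Throughout, write $g(x) \coloneqq \frac{d}{dq}[x]_q\big|_{q=1}$. Since $[x]_q$ is a rational function of $q$ that is regular at $q=1$, where it takes the value $x$, this derivative is well defined and $[x]_1 = x$.

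First I would differentiate the relation $[x+1]_q = q[x]_q + 1$ with respect to $q$ and evaluate at $q=1$, obtaining
\[
g(x+1) = [x]_1 + g(x) = x + g(x).
\]
Differentiating the second relation $[-1/x]_q = -q^{-1}/[x]_q$ and evaluating at $q=1$ (using $[x]_1 = x$) gives
\[
g\!\left(-\tfrac{1}{x}\right) = \frac{1}{x} + \frac{g(x)}{x^2}, \qquad x \neq 0.
\]
The base case is immediate: $[0]_q \equiv 0$, so $g(0) = 0$.

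Next I would check that $G(x) \coloneqq \tfrac{1}{2}\bigl(x^2 - x + 1 - f(x)^2\bigr)$ satisfies the same data. This needs two elementary facts about Thomae's function: $f(x+1) = f(x)$ and $f(-1/x)^2 = f(x)^2/x^2$, both of which follow by writing $x = a/b$ in lowest terms and tracking the denominators of $x+1 = (a+b)/b$ and of $-1/x = -b/a$. Granting these, a direct computation shows $G(x+1) = G(x) + x$ and $G(-1/x) = 1/x + G(x)/x^2$, while $G(0) = \tfrac{1}{2}\bigl(1 - f(0)^2\bigr) = 0$.

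The final and most delicate step is to upgrade this agreement into equality for every rational. Setting $D \coloneqq g - G$, the two recursions yield the homogeneous relations
\[
D(x+1) = D(x), \qquad D\!\left(-\tfrac{1}{x}\right) = \frac{D(x)}{x^2},
\]
with $D(0) = 0$. Because $T$ and $S$ generate $\text{PSL}(2,\mathbb{Z})$, which acts transitively on $\mathbb{Q}\cup\infty$, every rational can be reduced to $0$ by a finite sequence of the maps $x \mapsto x \pm 1$ and $x \mapsto -1/x$, concretely via the continued-fraction (Euclidean) algorithm. I would run an induction along this reduction: each application of $T^{\pm1}$ leaves $D$ unchanged, and each application of $S$ rescales it by the finite nonzero factor $1/x^2$, so that vanishing of $D$ at the reduced argument forces vanishing of $D$ at the original one. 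I expect the main obstacle to be precisely this inductive bookkeeping: one must choose a well-founded complexity measure (such as $|a| + |b|$ for $x = a/b$ in lowest terms) that strictly decreases under each Euclidean step, an application of $S$ followed by the appropriate power of $T$, and verify that $S$ is never applied at $x = 0$ so that no division by zero occurs. Once the reduction is shown to terminate at the base case, $D \equiv 0$ follows and the theorem is proved.
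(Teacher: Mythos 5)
Your proposal is correct in substance but takes a genuinely different route from the paper. The paper never differentiates the functional equations $[x+1]_q = q[x]_q+1$ and $[-1/x]_q = -q^{-1}/[x]_q$; instead it runs an induction over slabs of the $q$-deformed Stern--Brocot tree, using the lineage machinery of Section 4: Theorem 4.2 supplies the coefficients $C_1=2$, $C_2=2$, $C_3=-1$ expressing the quantity $\Delta_1$ of a child in terms of its ancestors, and the induction splits into four orientation cases, in each of which the inductive hypothesis is substituted and the identity for the child is verified. Your argument---differentiate the two defining relations at $q=1$, check that $G(x)=\tfrac{1}{2}\left(x^2-x+1-f(x)^2\right)$ satisfies the resulting inhomogeneous recursions with $G(0)=0$, then kill the difference $D=g-G$ by Euclidean reduction---is shorter and more elementary, and your two Thomae identities $f(x+1)=f(x)$ and $f(-1/x)^2=f(x)^2/x^2$ are exactly the facts that make the closed form compatible with both recursions; the homogeneous relations for $D$ and the direction of the implication (vanishing at the reduced argument forces vanishing at the original, since $D(x)=x^2D(-1/x)$) are all sound. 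What the paper's heavier approach buys is uniformity: the same Section 4 schema is reused verbatim for the second derivative in Section 6, where one needs separate control of the numerator and denominator polynomials (Lemma 6.1 and Corollary 6.2), something your method would have to rebuild by differentiating the functional equations twice. One repair to your bookkeeping: $|a|+|b|$ is not monotone under your moves (the translation $a/b \mapsto (a+b)/b$ increases it), so induct on the denominator instead---translations preserve it, and the compound step ``translate into $(0,1)$, then apply $S$'' sends denominator $b$ to $a \bmod b$, which is strictly smaller and nonzero when $\gcd(a,b)=1$ and $b>1$, so $S$ is never applied at $0$; integers then reduce to $0$ by the periodicity of $D$. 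With that measure the induction closes and the rest of your argument stands.
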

\begin{theorem}
Let $a/b$ be an irreducible rational number. The second derivative of its $q$-deformation $[a/b]_q$ at $q = 1$ is given by
\[
\frac{d^2}{dq^2}\left[\frac{a}{b}\right]_q\Bigr|_{q=1} = F\left(\frac{a}{b}\right) + f\left(\frac{a}{b}\right)^2G\left(\frac{a}{b}\right)+20\sum_{n=1}^{b-1}{\left\langle\frac{n}{a}\right\rangle_ b H\left(\frac{n}{b}\right)}
\]
in which
\begin{itemize}
\item $f(x)$ is Thomae's function (as given in Theorem 1.1)
\item \(F(x) = \frac{1}{3}x^3-x^2+\frac{5}{3}x-1\)
\item \(G(x) = 1 - x\)
\item \(H(x) = x^2(1 - x)\)
\item \(\left\langle\frac{n}{a}\right\rangle_b \coloneqq \frac{na^{-1}\:\mathrm{mod}\:b}{b}-\frac{1}{2}\)
\newline
where $a^{-1}$ is the inverse of $a$ in the multiplicative group \(\left(\mathbb{Z}\slash b\mathbb{Z}\right)^\times\) and \(x\:\mathrm{mod}\:b\) is the integer between $0$ and $b-1$ corresponding to the congruence class of $x$ modulo $b$.
\end{itemize}
\end{theorem}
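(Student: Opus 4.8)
The plan is to exploit the same recursive structure that underlies Theorem 1.1. Write $D_1(x) \coloneqq \frac{d}{dq}[x]_q\big|_{q=1}$ (given in closed form by Theorem 1.1) and $D_2(x) \coloneqq \frac{d^2}{dq^2}[x]_q\big|_{q=1}$, and recall that $[x]_q = x$ at $q=1$. I would first turn the two defining functional equations into recursions for $D_2$ under the generators $T\colon x\mapsto x+1$ and $S\colon x\mapsto -1/x$ of $\mathrm{PSL}(2,\mathbb{Z})$. Differentiating $[x+1]_q = q[x]_q + 1$ twice and setting $q=1$ gives the translation recursion
\[
D_2(x+1) = D_2(x) + 2D_1(x),
\]
and differentiating $[-1/x]_q = -q^{-1}[x]_q^{-1}$ twice gives the inversion recursion
\[
D_2(-1/x) = x^{-2}D_2(x) - 2x^{-3}D_1(x)^2 - 2x^{-2}D_1(x) - 2x^{-1}.
\]
Because $T$ and $S$ generate the modular group acting transitively on $\mathbb{Q}\cup\infty$, and each recursion expresses $D_2$ at a rational in terms of $D_2$ at a strictly simpler one (via the Euclidean algorithm on $a/b$), these two relations together with a base case determine $D_2$ uniquely. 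The proof then reduces to checking that the proposed closed form satisfies both recursions and the base case.

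For the base case I would use the integers, where $[n]_q = 1 + q + \cdots + q^{n-1}$ is the classical $q$-integer, so $D_2(n) = \sum_{k=0}^{n-1} k(k-1) = \frac{n(n-1)(n-2)}{3}$; since $f(n)=1$ and the Dedekind-type sum is empty when $b=1$, one checks directly that $F(n) + f(n)^2 G(n)$ reproduces this. Verifying the translation recursion for the closed form is then routine: the sum term is invariant under $a\mapsto a+b$, because $(a+b)^{-1}\equiv a^{-1}\pmod{b}$ leaves every $\langle n/a\rangle_b$ unchanged while $H(n/b)$ does not involve $a$, and $f$ is $1$-periodic. What remains is the polynomial identity $F(x+1)+f(x)^2G(x+1)-F(x)-f(x)^2G(x)=2D_1(x)$, which follows from $F(x+1)-F(x)=x^2-x+1$, $G(x+1)-G(x)=-1$, and Theorem 1.1.

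The crux is the inversion recursion, where the genuinely number-theoretic content lives. Under $S$ the roles of $a$ and $b$ are interchanged (up to sign), so the sum $\sum_{n=1}^{b-1}\langle n/a\rangle_b\,H(n/b)$ is sent to the analogous sum with $a$ and $b$ swapped. Matching the two sides of the inversion recursion therefore demands a reciprocity law for this generalized Dedekind sum, in the spirit of Dedekind--Rademacher reciprocity but weighted by the cubic $H(x)=x^2(1-x)$ and the sawtooth-like term $\langle\cdot\rangle_b$. I expect proving this reciprocity to be the main obstacle: one must track how the modular inverse $a^{-1}\bmod b$ transforms under $a\leftrightarrow b$ and disentangle the polynomial from the arithmetic contributions, so that the constant $20$ and the coefficients of $F$ and $G$ emerge correctly. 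Once the reciprocity is established, the remaining work is finite polynomial and modular-arithmetic bookkeeping, and the uniqueness argument closes the proof; care with sign conventions for negative rationals (whose $q$-deformations are Laurent rather than Taylor expandable at $q=0$, though not at $q=1$) will also be required.
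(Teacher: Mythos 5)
Your framework is sound and genuinely different from the paper's. Your two recursions are correctly derived: differentiating $[x+1]_q=q[x]_q+1$ twice gives $D_2(x+1)=D_2(x)+2D_1(x)$, and differentiating $[-1/x]_q=-q^{-1}[x]_q^{-1}$ twice does give $D_2(-1/x)=x^{-2}D_2(x)-2x^{-3}D_1(x)^2-2x^{-2}D_1(x)-2x^{-1}$. Your base case is right ($D_2(n)=\tfrac{n(n-1)(n-2)}{3}=F(n)+G(n)$), the translation step checks out ($F(x+1)-F(x)=x^2-x+1$, the sum term is invariant under $a\mapsto a+b$), and the uniqueness argument via strong induction on denominators through the Euclidean algorithm is valid. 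This is not the paper's route at all: the paper never touches the $S$-generator, instead running an induction over slabs of the $q$-deformed Stern--Brocot tree, expressing the second derivative at a child as a linear combination (with the coefficients of Theorem 4.2) of second derivatives along its lineage of order $5$, splitting into eight orientation cases, and needing Lemma 6.1 and Corollary 6.2 for the numerator/denominator derivatives --- machinery your approach avoids entirely by working with $[x]_q$ as a single function.

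However, there is a genuine gap, and it sits exactly where you say it does: the inversion step. Writing the theorem's sum as $\sum_{n=1}^{b-1}\langle n/a\rangle_b H(n/b)=-s_{1,3}(a,b)$, a generalized Dedekind sum, your inversion recursion demands a reciprocity law relating $s_{1,3}(a,b)$ to $s_{1,3}(b,a)$ with explicit polynomial correction terms that must simultaneously reproduce the contributions of $F$, $f^2G$, $D_1(x)^2$, and the constant $20$. Your proposal defers this (``I expect proving this reciprocity to be the main obstacle'') rather than proving or citing it, so the number-theoretic heart of the theorem is left unestablished; everything you do verify --- base case and translation --- is the easy part, precisely because those steps never interchange $a$ and $b$. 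The gap is fillable: the identity you need is an instance of the reciprocity formula for generalized Dedekind sums of Choi, Jun, Lee, and Lim (the paper's Equation 6.2, which the paper applies with $i=4$, $j=1$), and closely related reciprocity laws go back to Apostol; notably, the paper does not prove reciprocity from scratch either, but cites that formula and spends its effort on the tree-induction bookkeeping. So your route would likely work and is arguably leaner than the paper's eight-case analysis, but only once the $s_{1,3}$ reciprocity is imported or proven as an explicit lemma and the matching of correction terms is actually carried out --- which is the step your proposal leaves open.
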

\noindent These results expand upon that of Adam Sikora \cite{sikora2020tangle}, who found a closed form expression for the first derivative of the $q$-rationals at $q=1$ in certain special cases. Hopefully, a completed Taylor series for the quantum rationals will ease their calculation for future researchers.
\newline~
\newline
The rest of the paper will be structured as follows:
\newline~
\newline
\textbf{Section 2:} Properties of the $q$-rationals.
\newline~
\newline
\textbf{Section 3:} Computer experimentations I used to find the two main results of the paper.
\newline~
\newline
\textbf{Section 4:} A schema for proving identities for arbitrary-order derivatives of the $q$-rationals.
\newline~
\newline
\textbf{Section 5:} Proof of my first derivative identity.
\newline~
\newline
\textbf{Section 6:} Proof of my second derivative identity.
\newline~
\newline
\textbf{Section 7:} The conclusion, which outlines possibilities for future research.
\newline~
\newline
In addition, the paper contains two appendices:
\newline~
\newline
\textbf{Appendix A:} Given the properties observed in Section 2, I offer an original proof that the definition of $q$-rationals based on continued fractions and that based on algebraic fractional transformations are equivalent. (This result was formerly proven by Morier-Genoud and Ovsienko \cite{Morier_Genoud_2020}.)
\newline~
\newline
\textbf{Appendix B:} The proof of a lemma that supplements Section 6.
\section{\normalfont \textbf{Properties} \(\mid\) The $q$-Rationals}
In this section, I will give two identities for the $q$-rational numbers, which I prove are in fact equivalent in Appendix A. First, I provide a continued fraction expansion definition: for
\[
\frac{a}{b}=a_0+\cfrac{1}{a_1+\cfrac{1}{a_2+\cfrac{1}{a_3+\cfrac{1}{\cdots}}}}\]
we have
\begin{equation*}
\left[\frac{a}{b}\right]_q\coloneqq[a_0]_q+\cfrac{q^{a_0}}{[a_1]_{1/q}+\cfrac{q^{-a_1}}{[a_2]_q+\cfrac{q^{a_2}}{[a_3]_{1/q}+\cfrac{q^{-a_3}}{\cdots}}}}
\end{equation*}
in which $\left[a_n\right]_q$ is defined as it was in Section 1 \cite{Morier_Genoud_2019}.
\newline~
\newline
The second definition of the $q$-rationals is based on \textit{algebraic fractional transformations}. Throughout the rest of this section, I will denote $q$-rational numbers defined this way $\alpha(q)/\beta(q)$ rather than $[\alpha/\beta]_q$ so as to avoid confusion when I prove that both definitions are equivalent in Appendix A. Given
\[M=\begin{pmatrix}
\gamma & \alpha \\
\delta & \beta
\end{pmatrix} \in \mathrm{SL}_{2}(\mathbb{Z})\] we can generate a fractional algebraic transformation that maps from \(\mathbb{H} \cup \{\infty\} \cup \mathbb{R}\) to $\mathbb{C}$, in which $\mathbb{H}$ is the upper half-plane on the complex plane, as follows:
\begin{equation*}
M:q\mapsto \begin{dcases}
      \frac{\alpha(q)+q^n \gamma(q)}{\beta(q)+q^n \delta(q)} & \beta+q^n \delta \neq 0 \\
      \infty & \beta+q^n \delta=0
   \end{dcases}
\end{equation*}
in which $q \in \mathbb{H} \cup \infty$ is a formal parameter, $\alpha(q)$, $\beta(q)$, $\gamma(q)$, and $\delta(q)$ are contained in the polynomial ring over the integers $\mathbb{Z}[q]$, and $n = \text{ord }\beta(q) - \text{ord }\delta(q) + 1$ if $\text{ord }\beta(q)>\text{ord }\delta(q)$ and $1$ otherwise \cite{Morier_Genoud_2020}.
\newline~
\newline
Finally, we define
\[
\frac{\alpha(q)}{1(q)} = [\alpha]_q
\]
\begin{equation*}
\frac{(\alpha + \gamma)(q)}{(\beta + \delta)(q)} = \begin{pmatrix}
\gamma & \alpha \\
\delta & \beta
\end{pmatrix}(q)
\end{equation*}
\cite{Morier_Genoud_2020}.
This second definition will helps us observe some critical properties about the $q$-rational numbers. The image of 1 under $M$ is a rational number unique to $M$; this allows us to define a new operation, \textit{Farey addition}, as follows:
\[\frac{\alpha}{\beta} \oplus \frac{\gamma}{\delta} \coloneqq \begin{pmatrix}
\gamma & \alpha \\
\delta & \beta
\end{pmatrix}(1) = \frac{\alpha+\gamma}{\beta+\delta}\]
Starting with $\alpha/\beta=0/1$ and $\gamma/\delta=1/1$, repeated applications of Farey addition form a tree of rational numbers called the \textit{Stern-Brocot tree}:
\begin{center}
\begin{forest}
for tree={alias/.wrap pgfmath arg={a-#1}{id},font=\Large}
[$\frac{1}{2}$
[$\frac{1}{3}$
[$\frac{1}{4}$ 
[$\frac{1}{5}$] 
[$\frac{2}{7}$] ]
[$\frac{2}{5}$
[$\frac{3}{8}$]
[$\frac{3}{7}$] ] ]
[$\frac{2}{3}$
[$\frac{3}{5}$
[$\frac{4}{7}$] 
[$\frac{5}{8}$] ] 
[$\frac{3}{4}$ 
[$\frac{5}{7}$] 
[$\frac{4}{5}$] ] ] ]
\path (current bounding box.north west) node[below right,font=\Large] (tl) {$\frac{0}{1}$}
(tl) foreach \x in {2,3,4,5} {edge (a-\x)}
(current bounding box.north east) node[below right,font=\Large] (tr) {$\frac{1}{1}$}
(tr) foreach \x in {2,10,14,16} {edge (a-\x)}
(a-2) foreach \x in {7,9,11,12} {edge (a-\x)}
(a-3) foreach \x in {6,8} {edge (a-\x)}
(a-10) foreach \x in {13,15} {edge (a-\x)};
\end{forest}
\end{center}
Note that we calculate the tree in layers: after taking $0/1 \oplus 1/1 = 1/2$, we take the Farey sum of the two pairs of consecutive rational numbers thus far on the tree (the pair $0/1,1/2$ and the pair $1/2,1/1$) to obtain $1/3$ and $2/3$. Our tree now contains four pairs of consecutive rational numbers; taking the Farey sums of these pairs, we obtain $1/4$, $2/5$, $3/5$, and $3/4$. We can continue to sum every pair of consecutive rational numbers in our latest iteration of the tree to add layer after layer.
\newline~
\newline
I introduce the following definition to refer to the placement of rational numbers on the tree:
\begin{definition}
The \textit{depth} of a rational number on the Stern-Brocot tree is one less than the number of Farey sums needed to generate it, starting at two consecutive integers; for example, we define the depth of the $0/1$ and $1/1$ to be $-1$, the depth of $1/2$ to be $0$, the depth of $1/3$ and $2/3$ to be $1$, and so on.
\end{definition}
\noindent The tree looks analogous if we start with $\alpha/\beta=m/1$ and $\gamma/\delta=(m+1)/1$ for $m \in \mathbb{Z}$, only each fraction is shifted by $m$.
\newline~
\newline
Next, define the \textit{weighted Farey sum} between $\alpha/\beta$ and $\gamma/\delta$ as the map:
\[\frac{\alpha(q)}{\beta(q)} \oplus_q \frac{\gamma(q)}{\delta(q)} \coloneqq \begin{pmatrix}
\gamma & \alpha \\
\delta & \beta
\end{pmatrix}(q) = \frac{(\alpha+\gamma)(q)}{(\beta+\delta)(q)}\]
\cite{Morier_Genoud_2020}. Starting with $\alpha(q)/\beta(q) = 0/1$ and $\gamma(q)/\delta(q) = 1/1$, we reconstruct the Stern-Brocot tree using repeated applications of the weighted Farey sum to create the $q$\textit{-deformed Stern-Brocot tree}:
\begin{center}
\begin{forest}
for tree={alias/.wrap pgfmath arg={a-#1}{id},font=\footnotesize}
[$\frac{q}{q+1}$
[$\frac{q^2}{q^2+q+1}$
[$\frac{q^3}{q^3+q^2+q+1}$ 
[$\frac{q^4}{q^4+\cdots}$] 
[$\frac{q^4+\cdots}{q^4+\cdots}$] ]
[$\frac{q^3+q^2}{q^3+2q^2+q+1}$
[$\frac{q^4+\cdots}{q^4+\cdots}$]
[$\frac{q^4+\cdots}{q^4+\cdots}$] ] ]
[$\frac{q^2+q}{q^2+q+1}$
[$\frac{q^3+q^2+q}{q^3+q^2+2q+1}$
[$\frac{q^4+\cdots}{q^4+\cdots}$] 
[$\frac{q^4+\cdots}{q^4+\cdots}$] ] 
[$\frac{q^3+q^2+q}{q^3+q^2+q+1}$ 
[$\frac{q^4+\cdots}{q^4+\cdots}$] 
[$\frac{q^4+\cdots}{q^4+\cdots}$] ] ] ]
\path (current bounding box.north west) node[below right,font=\footnotesize] (tl) {$\frac{0}{1}$}
(tl) foreach \x in {2,3,4,5} {edge (a-\x)}
(current bounding box.north east) node[below right,font=\footnotesize] (tr) {$\frac{1}{1}$}
(tr) foreach \x in {2,10,14,16} {edge (a-\x)}
(a-2) foreach \x in {7,9,11,12} {edge (a-\x)}
(a-3) foreach \x in {6,8} {edge (a-\x)}
(a-10) foreach \x in {13,15} {edge (a-\x)};
\end{forest}
\end{center}
We define the depth of a $q$-rational on the $q$-deformed Stern-Brocot tree analogously to before. The tree looks similar if we start with $\alpha(1)/\beta(1) = m/1$ and $\gamma(1)/\delta(1) = (m+1)/1$, except each $q$-rational is shifted by a $q$-integer. Note that at $q = 1$, the $q$-deformed Stern-Brocot tree yields the original Stern-Brocot tree. The choice of $n = \text{ord }\beta(q) - \text{ord }\delta(q) + 1$ if $\text{ord }\beta(q)>\text{ord }\delta(q)$ and $1$ otherwise ensures that our two definitions of the $q$-rational numbers are equivalent, as was proven via a geometric argument by Morier-Genoud and Ovsienko \cite{Morier_Genoud_2020}; I offer an arithmetic proof in Appendix A.
\section{\textbf{Computer Experimentations Used to Find Main Results}}
I wrote a Python script to generate the $q$-deformed Stern-Brocot tree up to a certain depth and calculated the first derivatives of the $q$-rationals that it generated at $q = 1$:
\begin{figure}[H]
\includegraphics[scale=0.37]{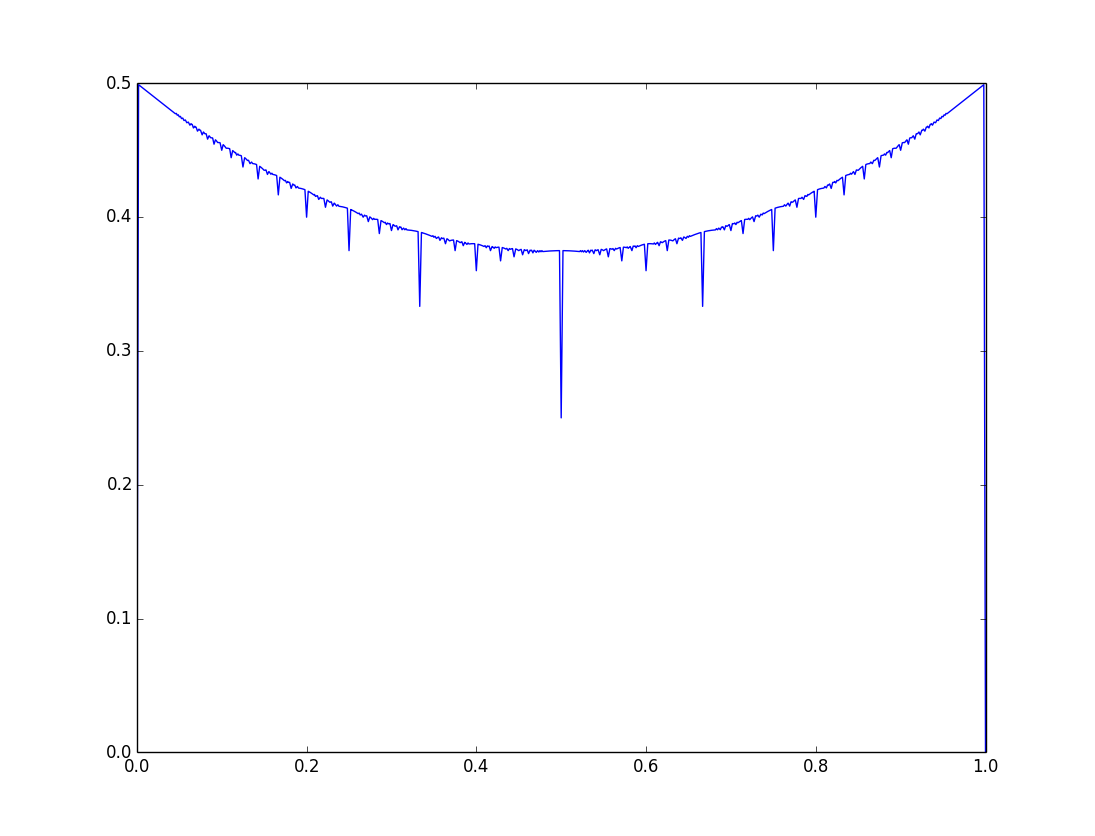}
\centering
\end{figure}
\noindent The curve is roughly quadratic, with spikes that are inversely proportional to the square of the fractions' denominators. Using these observations, I determined the formula for the curve to be of the form
\[\frac{d}{dq}\left[x\right]_q\Bigr|_{q=1} = \alpha x^2 + \beta x + \gamma + \delta f(x)^2\]
in which $f(x)$ is Thomae's function. Using the matrix equality
\[
\begin{pmatrix}
\frac{a_1^2}{b_1^2} & \frac{a_1}{b_1} & 1 & b_1^{-2} \\
\frac{a_2^2}{b_2^2} & \frac{a_2}{b_2} & 1 & b_2^{-2} \\
\frac{a_3^2}{b_3^2} & \frac{a_3}{b_3} & 1 & b_3^{-2} \\
\frac{a_4^2}{b_4^2} & \frac{a_4}{b_4} & 1 & b_4^{-2}
\end{pmatrix}\begin{pmatrix}
\alpha \\
\beta \\
\gamma \\
\delta
\end{pmatrix} =
\begin{pmatrix}
\left.\frac{d}{dq}\frac{a_1(q)}{b_1(q)}\right|_{q=1} \\
\left.\frac{d}{dq}\frac{a_2(q)}{b_2(q)}\right|_{q=1} \\
\left.\frac{d}{dq}\frac{a_3(q)}{b_3(q)}\right|_{q=1} \\
\left.\frac{d}{dq}\frac{a_4(q)}{b_4(q)}\right|_{q=1}
\end{pmatrix}
\]
in which $\frac{a_1(q)}{b_1(q)},\ldots,\frac{a_4(q)}{b_4(q)}$ are arbitrary $q$-rationals, I worked out via linear algebra that $\alpha = 1/2$, $\beta = -1/2$, $\gamma = 1/2$, and $\delta = -1/2$.
\newline
\newline~
I amended my Python script to plot the second derivatives of the $q$-rationals that it generated at $q$=1:
\begin{figure}[H]
\includegraphics[scale=0.5]{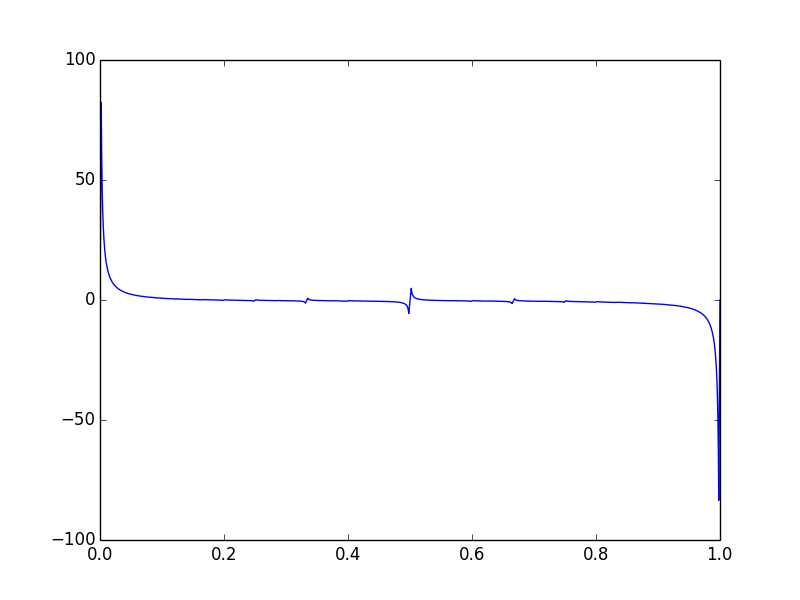}
\centering
\end{figure}
\noindent The resultant curve, which I refer to as \(g(x)\), resembles the function \(x^{-1} - (1-x)^{-1}\). Zooming in closer, we see that this function is actually just the first two terms of a series which approximates our curve:
\begin{figure}[H]
\includegraphics[scale=0.5]{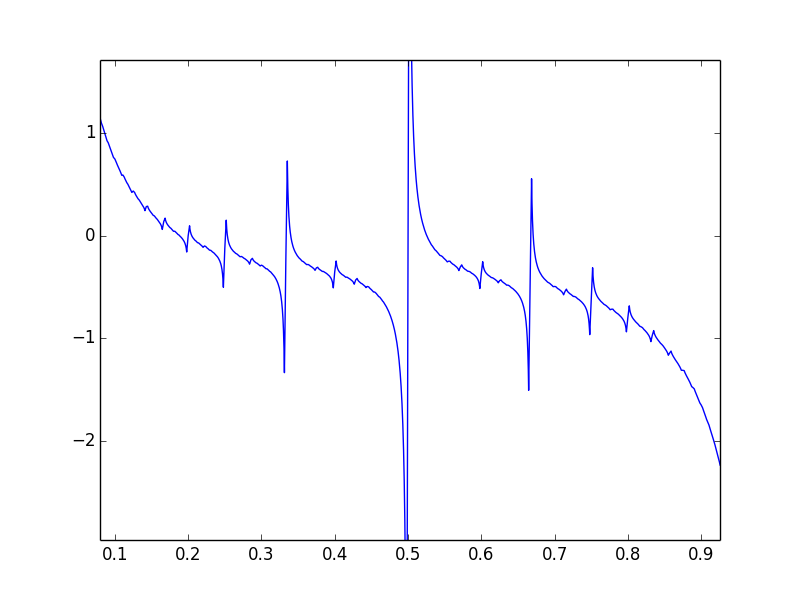}
\centering
\end{figure}
\noindent Completing the series in the graph, we obtain \[g(x) \approx K\sum_{\substack{p,q\in \mathbb{Z} \\ \gcd{p,q}=1}}{\frac{1}{q^4}\frac{1}{x-\frac{p}{q}}}\]
for some positive constant $K$. Unfortunately, this estimation blows up for \(x \in \mathbb{Q}\). The simplest way to avoid this issue is the following:
\[g(x) \approx K\sum_{\substack{p,q\in \mathbb{Z} \\ \gcd{p,q}=1 \\ \frac{p}{q}\neq x}}{\frac{1}{q^4}\frac{1}{x - \frac{p}{q}}}\]
However, a more organic choice is:
\[g\left(\frac{a}{b}\right) \approx K\sum_{\substack{p,q\in \mathbb{Z} \\ \gcd{p,q}=1 \\ b \neq q}}{\frac{1}{q^4}\frac{1}{\frac{a}{b}-\frac{p}{q}}}\]
and $g(x) = \lim_{a/b \rightarrow x}{g(a/b)}$ when $x$ is irrational. Note that since the rational numbers $p/q$ for $\gcd{p,q} = 1$ are symmetric around $a/b$ when $q \mid b$, we have
\[K\sum_{\substack{p,q\in \mathbb{Z} \\ \gcd{p,q}=1 \\ q \mid b \neq q }}{\frac{1}{q^4}\frac{1}{\frac{a}{b}-\frac{p}{q}}} = 0\]
and thus
\[g\left(\frac{a}{b}\right) \approx K\sum_{m=1}^{b-1}{\sum_{n=1}^{\infty}{\sum_{\substack{p \in \mathbb{Z} \\ \gcd{p,nq+m} = 1}}{\frac{1}{(nq+m)^4}\frac{1}{\frac{a}{b}-\frac{p}{nq+m}}}}}\]
As it turns out,
\[
\sum_{n=1}^{\infty}{\sum_{\substack{p \in \mathbb{Z} \\ \gcd{p,nq+m} = 1}}{\frac{1}{(nq+m)^4}\frac{1}{\frac{a}{b}-\frac{p}{nq+m}}}} \approx -\left\langle\frac{m}{a}\right\rangle_ b B_3\left(\frac{m}{b}\right)
\]
and thus \[g\left(\frac{a}{b}\right)\approx -K\sum_{m=1}^{b-1}{\left\langle\frac{m}{a}\right\rangle_ b B_3\left(\frac{m}{b}\right)}\]
in which, as before, \[\left\langle\frac{m}{a}\right\rangle_b \coloneqq \frac{ma^{-1}\:\mathrm{mod}\:b}{b}-\frac{1}{2}\]
where $a^{-1}$ is the inverse of $a$ in the multiplicative group \(\left(\mathbb{Z}\slash b\mathbb{Z}\right)^\times\) and \(x\:\mathrm{mod}\:b\) is the integer between $0$ and $b-1$ corresponding to the congruence class of $x$ modulo $b$. By its definition, the second derivative of \([a/b]]_q\) must have a denominator dividing $b^3$. This is exactly what we get from the approximation $g(a/b)$ above; from here, I knew that \(g(a/b)\) was of the form \[g\left(\frac{a}{b}\right)=W\left(\frac{a}{b}\right)-K\sum_{n=1}^{b-1}{\left\langle\frac{n}{a}\right\rangle_ b B_3\left(\frac{n}{b}\right)}\]
such that \(b^3\times W\left(\frac{a}{b}\right)\in\mathbb{Z}\) and \(K\in\mathbb{N}\). Using the matrix equality
\[
\begin{pmatrix}
\frac{1}{b_1^3} & \frac{a_1}{b_1^3} & \frac{a_1^2}{b_1^3} & \frac{a_1^3}{b_1^3} & \frac{1}{b_1^2} & \frac{a_1}{b_1^2} & \frac{a_1^2}{b_1^2} & \frac{1}{b_1} & \frac{a_1}{b_1} & 1 & \lambda\left(\frac{a_1}{b_1}\right) \\
\frac{1}{b_2^3} & \frac{a_2}{b_2^3} & \frac{a_2^2}{b_2^3} & \frac{a_2^3}{b_2^3} & \frac{1}{b_2^2} & \frac{a_2}{b_2^2} & \frac{a_2^2}{b_2^2} & \frac{1}{b_2} & \frac{a_2}{b_2} & 1 & \lambda\left(\frac{a_2}{b_2}\right) \\
\vdots & \vdots & \vdots & \vdots & \vdots & \vdots & \vdots & \vdots & \vdots & \vdots & \vdots \\
\frac{1}{b_{11}^3} & \frac{a_{11}}{b_{11}^3} & \frac{a_{11}^2}{b_{11}^3} & \frac{a_{11}^3}{b_{11}^3} & \frac{1}{b_{11}^2} & \frac{a_{11}}{b_{11}^2} & \frac{a_{11}^2}{b_{11}^2} & \frac{1}{b_{11}} & \frac{a_{11}}{b_{11}} & 1 & \lambda\left(\frac{a_{11}}{b_{11}}\right)
\end{pmatrix}\begin{pmatrix}
c_1 \\
c_2 \\
\vdots \\
c_{11}
\end{pmatrix} =
\]
\[
\begin{pmatrix}
\left.\frac{d^2}{dq^2}\frac{a_1(q)}{b_1(q)}\right|_{q=1} \\
\left.\frac{d^2}{dq^2}\frac{a_2(q)}{b_2(q)}\right|_{q=1} \\
\vdots \\
\left.\frac{d^2}{dq^2}\frac{a_{11}(q)}{b_{11}(q)}\right|_{q=1}
\end{pmatrix}
\]
in which \[\lambda\left(\frac{a}{b}\right) = \sum_{n=1}^{b-1}{\left\langle\frac{n}{a}\right\rangle_ b B_3\left(\frac{n}{b}\right)}\] and $\frac{a_1(q)}{b_1(q)},\ldots,\frac{a_{11}(q)}{b_{11}(q)}$ are arbitrary $q$-rationals, I worked out that \(K=20\) and \(W(x)=F(x) + f(x)^2G(x)\), in which \(F(x) = x^3/3-x^2+5x/3-1\), \(f(x)\) is Thomae's function, and \(G(x) = 1 - x\).
\section{\normalfont \textbf{Proof Schema} \(\mid\) Proving Identities for Arbitrary-Order Derivatives of the $q$-Rationals}
In this section, I will provide a structure by which we can prove an identity for the $n$th derivative of the $q$-rationals at $q = 1$. The structure requires that we have already proven the following identities.
\newline~
\newline
\textit{The closed-form expressions }$f_1(x),\ldots,f_{n-1}(x),g_1(x)\ldots,g_{n-1}(x)$\textit{ satisfy that, for all }$q$\textit{-rationals }$\left[\frac{a}{b}\right]_q=\frac{a(q)}{b(q)}$\textit{, we have}
\[
\left.\frac{d}{dq}a(q)\right|_{q=1}=f_1\left(\frac{a}{b}\right),\ldots,\left.\frac{d^{n-1}}{d^{n-1}q}a(q)\right|_{q=1}=f_{n-1}\left(\frac{a}{b}\right)
\]
\textit{and}
\[
\left.\frac{d}{dq}b(q)\right|_{q=1}=g_1\left(\frac{a}{b}\right),\ldots,\left.\frac{d^{n-1}}{d^{n-1}q}b(q)\right|_{q=1}=g_{n-1}\left(\frac{a}{b}\right)
\]
I now introduce the notion of a $q$-rational's \textit{lineage of order }$m$:
\begin{definition}
Given $\left[\frac{\alpha}{\beta}\right]_q=\frac{\alpha(q)}{\beta(q)}$, define its \textit{lineage of order }$m$
\[
\frac{a_1(q)}{b_1(q)},\ldots,\frac{a_m(q)}{b_m(q)}
\]
such that $\frac{a_m(q)}{b_m(q)} = \frac{\alpha(q)}{\beta(q)}$ and the parents of $\frac{a_n(q)}{b_n(q)}$ on the Stern-Brocot tree are $\frac{a_{n-1}(q)}{b_{n-1}(q)}$ and $\frac{a_{\zeta_n}(q)}{b_{\zeta_n}(q)}$ for some positive integer $\zeta_n < n - 1$. We call a lineage of order $m$ \textit{vanishing} if $\frac{a_1(q)}{b_1(q)} \in \mathbb{Z} \cup \infty$ and \textit{non-vanishing} otherwise.
\end{definition}
\noindent Examples of lineages of orders 4 and 5 are shown below.
\begin{figure}[H]
\begin{floatrow}
\begin{subfigure}[t]{0.5\linewidth}
\begin{center}
\begin{forest}
for tree={alias/.wrap pgfmath arg={a-#1}{id},font=\Large}
[$\frac{a_2}{b_2}$
[$\frac{a_3}{b_3}$
[] 
[$\frac{a_4}{b_4}$] ]
[
[]
[] ] ]
\path (current bounding box.north west) node[below right,font=\Large] (tl) {$\frac{a_1}{b_1}$}
(tl) foreach \x in {2,3,4} {edge (a-\x)}
(current bounding box.north east) node[below right,font=\Large] (tr) {}
(tr) foreach \x in {2,6,8} {edge (a-\x)}
(a-2) foreach \x in {5,7} {edge (a-\x)};
\draw (tl) edge [very thick] (a-2)
(tl) edge [very thick] (a-3)
(a-2) edge [very thick] (a-3)
(a-2) edge [very thick] (a-5)
(a-3) edge [very thick] (a-5);
\end{forest}
\end{center}
\caption{Fig. 4.1: A lineage of order 4.}
\end{subfigure}
\begin{subfigure}[t]{0.5\linewidth}
\begin{center}
\begin{forest}
for tree={alias/.wrap pgfmath arg={a-#1}{id},font=\Large}
[$\frac{a_2}{b_2}$
[
[
[] 
[] ]
[
[]
[] ] ]
[$\frac{a_3}{b_3}$
[
[] 
[] ] 
[$\frac{a_4}{b_4}$
[] 
[$\frac{a_5}{b_5}$] ] ] ]
\path (current bounding box.north west) node[below right,font=\Large] (tl) {}
(tl) foreach \x in {2,3,4,5} {edge (a-\x)}
(current bounding box.north east) node[below right,font=\Large] (tr) {$\frac{a_1}{b_1}$}
(tr) foreach \x in {2,10,14,16} {edge (a-\x)}
(a-2) foreach \x in {7,9,11,12} {edge (a-\x)}
(a-3) foreach \x in {6,8} {edge (a-\x)}
(a-10) foreach \x in {13,15} {edge (a-\x)};
\draw (tr) edge [very thick] (a-2)
(tr) edge [very thick] (a-10)
(tr) edge [very thick] (a-14)
(tr) edge [very thick] (a-16)
(a-2) edge [very thick] (a-10)
(a-10) edge [very thick] (a-14)
(a-14) edge [very thick] (a-16);
\end{forest}
\end{center}
\caption{Fig. 4.2: A lineage of order 5.}
\end{subfigure}
\end{floatrow}
\end{figure}
\noindent There exist sequences $\mathfrak{F}_1(q),\ldots,\mathfrak{F}_m(q),\mathfrak{G}_1(q),\ldots,\mathfrak{G}_m(q)$ for which we can re-express the $q$-rationals along the lineage of order $m$ of $[\alpha/\beta]_q$ as
\begin{equation}
\frac{a_n(q)}{b_n(q)} = \frac{\mathfrak{F}_n(q) a_1(q)+\mathfrak{G}_n(q) a_2(q)}{\mathfrak{F}_n(q) b_1(q)+\mathfrak{G}_n(q) b_2(q)}
\tag{4.1}
\end{equation}
These sequences can be calculated as follows: let $\mathfrak{F}_1(q) = 1,\mathfrak{F}_2(q) = 0,\mathfrak{G}_1(q) = 0,\mathfrak{G}_2(q) = 1$, and define $\mathfrak{F}_n$ and $\mathfrak{G}_n$ by the recurrence relation
\begin{alignat*}{3}
& \mathfrak{F}_n(q) = \mathfrak{F}_{n-1}(q) + q^{\xi_n}\mathfrak{F}_{\zeta_n}(q) && \quad \mathfrak{G}_n(q) = \mathfrak{G}_{n-1}(q) + q^{\xi_n}\mathfrak{G}_{\zeta_n}(q)
\end{alignat*}
in which $\zeta_n \in \mathbb{Z}_{>0}$ is chosen such that $\frac{a_{\zeta_n}(q)}{a_{\zeta_n}(q)}$ is the parent of $\frac{a_n(q)}{a_n(q)}$ on the $q$-deformed Stern-Brocot tree that is not $\frac{a_{n-1}(q)}{a_{n-1}(q)}$, and 
\begin{equation*}\
{\small
  \xi_n=
  \begin{cases}
    \begin{cases}
      \text{ord }b_{\zeta_n}(q) - \text{ord }b_{n-1}(q) + 1 & \text{if }b_{\zeta_n}(q) > \text{ord }b_{n-1}(q)\\
      1 & \text{otherwise}
    \end{cases}
    &\text{if }\frac{a_{n-1}(q)}{b_{n-1}(q)} > \frac{a_{\zeta_n}(q)}{b_{\zeta_n}(q)}\\
    \begin{cases}
     \text{ord }b_{n-1}(q) - \text{ord }b_{\zeta_n}(q) + 1 & \text{if }\text{ord }b_{n-1}(q) > b_{\zeta_n}(q)\\
      1 & \text{otherwise}
    \end{cases}
    &\text{otherwise}
  \end{cases}}
\end{equation*}
To show why our choices of $\mathfrak{F}_1(q),\ldots,\mathfrak{F}_m(q),\mathfrak{G}_1(q),\ldots,\mathfrak{G}_m(q)$ satisfy Equation 4.1, first let
\[L_n(q) = \frac{\mathfrak{F}_{\zeta_n}(q) a_1(q)+\mathfrak{G}_{\zeta_n}(q) a_2(q)}{\mathfrak{F}_{\zeta_n}(q) b_1(q)+\mathfrak{G}_{\zeta_n}(q) b_2(q)}\]
Our choice of $\xi_n$ above ensures that
\begin{equation*}
L_n(q) = \begin{cases}
L_{\zeta_n}(q) \oplus_q L_{n-1}(q)
& \text{if }L_{n-1}(1)>L_{\zeta_n}(1)\\
L_{n-1}(q) \oplus_q L_{\zeta_n}(q){b_{\zeta_n}}
    & \text{otherwise}
\end{cases}
\end{equation*}
Since $L_1(q)=a_1(q)/b_1(q)$ and  $L_2(q)=a_2(q)/b_2(q)$ by definition, we can show via induction that $L_n(q)=a_n(q)/b_n(q)$. We find that the sequences we chose satisfy Theorem 4.2, and we can use them to rewrite the tree above in terms of $a_1/b_1$ and $a_2/b_2$:
\begin{figure}[H]
\begin{floatrow}
\begin{subfigure}[t]{0.5\linewidth}
\begin{center}
\begin{forest}
for tree={alias/.wrap pgfmath arg={a-#1}{id},font=\Large}
[$\frac{a_2}{b_2}$
[$\frac{a_1+qa_2}{b_1+qb_2}$
[] 
[$\frac{a_1+(q+q^2)a_2}{b_1+(q+q^2)b_2}$] ]
[
[]
[] ] ]
\path (current bounding box.north west) node[below right,font=\Large] (tl) {$\frac{a_1}{b_1}$}
(tl) foreach \x in {2,3,4} {edge (a-\x)}
(current bounding box.north east) node[below right,font=\Large] (tr) {}
(tr) foreach \x in {2,6,8} {edge (a-\x)}
(a-2) foreach \x in {5,7} {edge (a-\x)};
\draw (tl) edge [very thick] (a-2)
(tl) edge [very thick] (a-3)
(a-2) edge [very thick] (a-3)
(a-2) edge [very thick] (a-5)
(a-3) edge [very thick] (a-5);
\end{forest}
\end{center}
\caption{Fig. 4.3: A relabeling of Fig. 4.1.}
\end{subfigure}
\begin{subfigure}[t]{0.5\linewidth}
\captionsetup{singlelinecheck=off}
\begin{center}
\begin{forest}
for tree={alias/.wrap pgfmath arg={a-#1}{id},font=\Large}
[$\frac{a_2}{b_2}$
[
[
[] 
[] ]
[
[]
[] ] ]
[$\frac{a_1+\mathfrak{G}_3(q)a_2}{b_1+\mathfrak{G}_3(q)b_2}$
[
[] 
[] ] 
[$\frac{a_1+\mathfrak{G}_4(q)a_2}{b_1+\mathfrak{G}_4(q)b_2}$
[] 
[$\frac{a_1+\mathfrak{G}_5(q)a_2}{b_1+\mathfrak{G}_5(q)b_2}$] ] ] ]
\path (current bounding box.north west) node[below right,font=\Large] (tl) {}
(tl) foreach \x in {2,3,4,5} {edge (a-\x)}
(current bounding box.north east) node[below right,font=\Large] (tr) {$\frac{a_1}{b_1}$}
(tr) foreach \x in {2,10,14,16} {edge (a-\x)}
(a-2) foreach \x in {7,9,12} {edge (a-\x)}
(a-3) foreach \x in {6,8} {edge (a-\x)}
(a-10) foreach \x in {13} {edge (a-\x)};
\draw (tr) edge [very thick] (a-2)
(tr) edge [very thick] (a-10)
(tr) edge [very thick] (a-14)
(tr) edge [very thick] (a-16)
(a-2) edge [very thick] (a-10)
(a-2) edge [thick, dotted] (a-11)
(a-10) edge [very thick] (a-14)
(a-10) edge [thick, dotted] (a-15)
(a-14) edge [very thick] (a-16);
\end{forest}
\end{center}
\caption{Fig. 4.4: A relabeling of Fig. 4.2, in which \\
\begin{itemize}
\item $\mathfrak{G}_3(q)=q^{n-1}$
\item $\mathfrak{G}_4(q)=q^{n}+q^{n-1}$
\item $\mathfrak{G}_5(q)=q^{n+1}+q^{n}+q^{n-1}$
\end{itemize}}
\end{subfigure}
\end{floatrow}
\end{figure}
\noindent Let $f_n = \mathfrak{F}_n(1)$ and $g_n = \mathfrak{G}_n(1)$ for all $n \in [1,m]$. Going forward, I will refer to $f_1,\ldots,f_m,g_1,\ldots,g_m$ as the \textit{weights} of a given lineage of order $m$. Note that, in our proofs in Sections 5 and 6, I express the $q$-rationals along the lineage of order $m$ of $[\alpha/\beta]_q$ in terms of $a(q) = a_{m-1}(q)$, $b(q) = b_{m-1}(q)$, $c(q) = a_1(q)$, and $d(q) = b_1(q)$. Relabelings of the trees in Fig. 4.1 and Fig. 4.2 using this convention are given below.
\begin{figure}[H]
\begin{floatrow}
\begin{subfigure}[t]{0.5\linewidth}
\begin{center}
\begin{forest}
for tree={alias/.wrap pgfmath arg={a-#1}{id},font=\Large}
[$\frac{a-c}{b-d}$
[$\frac{a}{b}$
[] 
[$\frac{(q+1)a-qc}{(q+1)b-qd}$] ]
[
[]
[] ] ]
\path (current bounding box.north west) node[below right,font=\Large] (tl) {$\frac{c}{d}$}
(tl) foreach \x in {2,3,4} {edge (a-\x)}
(current bounding box.north east) node[below right,font=\Large] (tr) {}
(tr) foreach \x in {2,6,8} {edge (a-\x)}
(a-2) foreach \x in {5,7} {edge (a-\x)};
\draw (tl) edge [very thick] (a-2)
(tl) edge [very thick] (a-3)
(a-2) edge [very thick] (a-3)
(a-2) edge [very thick] (a-5)
(a-3) edge [very thick] (a-5);
\end{forest}
\end{center}
\caption{Fig. 4.5: An alternate re-labeling of Fig. 4.1.}
\end{subfigure}
\begin{subfigure}[t]{0.5\linewidth}
\begin{center}
\begin{forest}
for tree={alias/.wrap pgfmath arg={a-#1}{id},font=\Large}
[$\frac{a-(q^{n}+q^{n-1})c}{b-(q^{n}+q^{n-1})d}$
[
[
[] 
[] ]
[
[]
[] ] ]
[$\frac{a-q^{n}c}{b-q^{n}d}$
[
[] 
[] ] 
[$\frac{a}{b}$
[] 
[$\frac{a+q^{n+1}c}{b+q^{n+1}d}$] ] ] ]
\path (current bounding box.north west) node[below right,font=\Large] (tl) {}
(tl) foreach \x in {2,3,4,5} {edge (a-\x)}
(current bounding box.north east) node[below right,font=\Large] (tr) {$\frac{c}{d}$}
(tr) foreach \x in {2,10,14,16} {edge (a-\x)}
(a-2) foreach \x in {7,9,11,12} {edge (a-\x)}
(a-3) foreach \x in {6,8} {edge (a-\x)}
(a-10) foreach \x in {13,15} {edge (a-\x)};
\draw (tr) edge [very thick] (a-2)
(tr) edge [very thick] (a-10)
(tr) edge [very thick] (a-14)
(tr) edge [very thick] (a-16)
(a-2) edge [very thick] (a-10)
(a-10) edge [very thick] (a-14)
(a-14) edge [very thick] (a-16);
\end{forest}
\end{center}
\caption{Fig. 4.6: An alternate re-labeling of Fig. 4.2.}
\end{subfigure}
\end{floatrow}
\end{figure}
\noindent Note that a factor of $q$ is canceled out from their respective numerators and denominators to yield $a/b$ and $c/d$ in Fig. 4.5; I will similarly reduce $q$-rationals on the $q$-deformed Stern-Brocot tree throughout the rest of the paper. Also note that, while I instead express the $q$-rationals in terms of $a_1(q)$, $a_2(q)$, $b_1(q)$, and $b_2(q)$ in the proof structure, the substitution
\begin{alignat*}{3}
& a(q) = \mathfrak{F}_{m-1}(q) a_1(q) + \mathfrak{G}_{m-1}(q) a_2(q) && \quad b(q) = \mathfrak{F}_{m-1}(q) b_1(q) + \mathfrak{G}_{m-1}(q) b_2(q) \\
& c(q) = \mathfrak{F}_{1}(q) a_1(q) + \mathfrak{G}_{1}(q) a_2(q) && \quad d(q) = \mathfrak{F}_{1}(q) b_1(q) + \mathfrak{G}_{1}(q) b_2(q)
\end{alignat*}
shows that the convention above does not come with loss of generality. We now prove a theorem that will be instrumental to our proof structure:
\begin{theorem}
Let
\[\Delta_i\left(\frac{\alpha(q)}{\beta(q)}\right)=\left.\left(\frac{\alpha^{(i)}(q)}{\beta(q)}+(-1)^{i}\frac{\alpha(q)\beta^{(i)}(q)}{\beta^{i+1}(q)}\right)\right|_{q=1}\]
for $\alpha(q),\beta(q) \in \mathbb{Z}[q]$. Denote the lineage of order $m$ of $\left[\frac{\alpha}{\beta}\right]_q=\frac{\alpha(q)}{\beta(q)}$ as $\frac{a_1(q)}{b_1(q)},\ldots,\break\frac{a_m(q)}{b_m(q)}$, and suppose this lineage is non-vanishing. Furthermore, denote the lineage's weights $f_1,\ldots,f_m,g_1,\ldots,g_m$. $\Delta_{m-3}\left(\frac{\alpha(q)}{\beta(q)}\right)$ is linearly dependent on 
$\left(\frac{b_i(1)}{b(1)}\right)^{m-2}\Delta_{m-3}\left(\frac{a_1(q)}{b_1(q)}\right),\ldots,\left(\frac{b_{m-1}(1)}{b(1)}\right)^{m-2}\Delta_{m-3}\left(\frac{a_{m-1}(q)}{b_{m-1}(q)}\right)$; moreover, given
\begin{equation*}
\Delta_{m-3}\left(\frac{\alpha(q)}{\beta(q)}\right) = \sum_{i=1}^{m-1}{C_i\left(\frac{b_i(1)}{b(1)}\right)^{m-2}\Delta_{m-3}\left(\frac{a_i(q)}{b_i(q)}\right)}
\end{equation*}
we have that
\begin{equation}
C_i = \prod_{\substack{1\leq n < m \\ n \neq i}}{\frac{f_m g_n-f_n g_m}{f_i g_n-f_n g_i}}
\tag{4.2}
\end{equation}
\end{theorem}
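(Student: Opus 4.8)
The plan is to read formula (4.2) as the rule for re-expanding one symmetric power of a weight vector in the symmetric powers of the others, and then to exhibit the normalized quantity in the theorem as the image of those symmetric powers under a single fixed linear functional. Write $v_n=(f_n,g_n)=(\mathfrak{F}_n(1),\mathfrak{G}_n(1))$ for the weight vector of the $n$-th lineage member, and set
\[
D(a_n,b_n):=b_n(1)^{m-2}\,\Delta_{m-3}\!\left(\frac{a_n(q)}{b_n(q)}\right)=b_n(1)^{m-3}a_n^{(m-3)}(1)+(-1)^{m-3}a_n(1)\,b_n^{(m-3)}(1).
\]
Then each summand $\left(\frac{b_i(1)}{b(1)}\right)^{m-2}\Delta_{m-3}(a_i/b_i)$ equals $b(1)^{-(m-2)}D(a_i,b_i)$, and, reading $b(1)=\beta(1)=b_m(1)$ so that the left-hand side $\Delta_{m-3}(\alpha/\beta)=D(a_m,b_m)/b_m(1)^{m-2}$ carries the matching normalization, the asserted identity is equivalent after clearing the common factor $b(1)^{-(m-2)}$ to the clean relation $D(a_m,b_m)=\sum_{i=1}^{m-1}C_i\,D(a_i,b_i)$.

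The combinatorial core is the purely linear-algebraic identity
\[
v_m^{\otimes(m-2)}=\sum_{i=1}^{m-1}C_i\,v_i^{\otimes(m-2)}\qquad\text{in }\mathrm{Sym}^{m-2}(\mathbb{R}^2),
\]
with $C_i$ exactly as in (4.2). The non-vanishing hypothesis guarantees (via the unimodularity of adjacent Stern--Brocot pairs) that $v_1,\dots,v_{m-1}$ are pairwise independent, so every denominator $f_ig_n-f_ng_i$ in (4.2) is nonzero and $\{v_i^{\otimes(m-2)}\}_{i=1}^{m-1}$ is a basis of the $(m-1)$-dimensional space $\mathrm{Sym}^{m-2}(\mathbb{R}^2)$. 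I would prove the displayed identity by dehomogenizing: writing $t_n=f_n/g_n$, one has $v_n^{\otimes(m-2)}=g_n^{m-2}(t_n,1)^{\otimes(m-2)}$, and the vectors $(t_n,1)^{\otimes(m-2)}$ are points on the degree-$(m-2)$ moment curve, so expressing the $m$-th in terms of the first $m-1$ is Lagrange interpolation with coefficients $\prod_{n\neq i}\frac{t_m-t_n}{t_i-t_n}$. Using $f_ag_b-f_bg_a=g_ag_b(t_a-t_b)$ to re-homogenize converts these into $\prod_{n\neq i}\frac{f_mg_n-f_ng_m}{f_ig_n-f_ng_i}$, i.e.\ exactly $C_i$.

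It then remains to prove the key lemma: that $D(a_n,b_n)$ depends on $n$ \emph{only} through $v_n^{\otimes(m-2)}$, so that some fixed covector $\Lambda$ (built from $a_1,a_2,b_1,b_2$ and their $q$-derivatives at $1$) satisfies $D(a_n,b_n)=\langle\Lambda,\,v_n^{\otimes(m-2)}\rangle$ for every $n$. Given this, pairing $\Lambda$ with the symmetric-power identity of the previous paragraph yields $D(a_m,b_m)=\sum_iC_iD(a_i,b_i)$, and the theorem follows. To attempt the lemma I would substitute $a_n=\mathfrak{F}_na_1+\mathfrak{G}_na_2$ and $b_n=\mathfrak{F}_nb_1+\mathfrak{G}_nb_2$ into $D$, expand the $(m-3)$-rd derivative by the Leibniz rule, and collect terms by the base brackets $a_1b_2-a_2b_1$, $b_1a_1'-a_1b_1'$, etc.

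The hard part will be precisely this lemma. After the Leibniz expansion, $D(a_n,b_n)$ a priori involves the higher $q$-derivatives $\mathfrak{F}_n^{(k)}(1),\mathfrak{G}_n^{(k)}(1)$ for $1\le k\le m-3$, and these are \emph{not} determined by $(f_n,g_n)$ alone; making them reorganize into a genuine degree-$(m-2)$ form in $(f_n,g_n)$ is the crux. The tool I expect to need is the $q$-deformed unimodularity of the weight pair along the lineage, $\mathfrak{F}_n(q)\mathfrak{G}_{n-1}(q)-\mathfrak{F}_{n-1}(q)\mathfrak{G}_n(q)=\pm q^{e_n}$ (one checks directly from the recurrence that adjacent weight pairs have monomial $q$-determinant), whose successive $q$-derivatives at $1$ relate the various $\mathfrak{F}_n^{(k)}(1),\mathfrak{G}_n^{(k)}(1)$ and force the unwanted terms to cancel. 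Controlling exactly how far these cancellations go — and verifying that no residual weight-derivative term survives outside the span of $v_n^{\otimes(m-2)}$ — is where the real work lies; the symmetric-power bookkeeping and the Lagrange/Vandermonde computation of the $C_i$ are then routine.
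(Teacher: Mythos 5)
Your second step --- the symmetric-power identity $v_m^{\otimes(m-2)}=\sum_{i=1}^{m-1}C_iv_i^{\otimes(m-2)}$ proved by dehomogenizing to the moment curve and applying Lagrange interpolation --- is exactly the computation the paper performs: it matches coefficients, obtains a transposed Vandermonde system, and inverts it with Lagrange polynomials to get (4.2). The fatal problem is the ``key lemma'' you yourself flag as the crux: it is false, and no amount of unimodularity bookkeeping will make it true. Take $m=4$, so that $D(a_n,b_n)=\bigl(b_na_n'-a_nb_n'\bigr)(1)$. Substituting $a_n=\mathfrak{F}_na_1+\mathfrak{G}_na_2$ and $b_n=\mathfrak{F}_nb_1+\mathfrak{G}_nb_2$ and expanding gives
\begin{equation*}
D(a_n,b_n)=f_n^2W_{11}+f_ng_nW_{12}+g_n^2W_{22}+\bigl(f_n\mathfrak{G}_n'(1)-g_n\mathfrak{F}_n'(1)\bigr)\bigl(a_2b_1-a_1b_2\bigr)(1),
\end{equation*}
where $W_{11},W_{12},W_{22}$ depend only on the base polynomials and their first derivatives. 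Unimodularity works \emph{against} you here: $(a_2b_1-a_1b_2)(1)=\pm1$, so the last term survives intact, and its factor is not a function of $(f_n,g_n)$ --- for $\mathfrak{F}_3=q^{\xi_3}$, $\mathfrak{G}_3=1$ one gets $f_3\mathfrak{G}_3'(1)-g_3\mathfrak{F}_3'(1)=-\xi_3$, which depends on the tree exponent $\xi_3$ even though $(f_3,g_3)=(1,1)$ always. Hence no fixed covector $\Lambda$ with $D(a_n,b_n)=\langle\Lambda,v_n^{\otimes(m-2)}\rangle$ exists, and the ``clean relation'' $D(a_m,b_m)=\sum_iC_iD(a_i,b_i)$ to which you reduced the theorem is false as an equality of numbers.

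The paper's own applications confirm this. In Section 5, Case 1 (where $m=4$ and $\Delta_1$ literally is $\frac{d}{dq}$), the combination built with the Theorem 4.2 coefficients equals $1/(b+d)^2$, not $0$; after clearing $b_4(1)^{-2}$ this says precisely $D(a_4,b_4)-\sum_iC_iD(a_i,b_i)=1$. Section 6 likewise produces remainders such as $(6c-4b-4d)/(b+d)^3$. So the theorem cannot be read as an exact numerical identity; it is a statement about dependence on the \emph{top-order} derivatives only: expand both sides via Equation 4.1, regard $a_1^{(m-3)}(1),a_2^{(m-3)}(1),b_1^{(m-3)}(1),b_2^{(m-3)}(1)$ as formal variables, and the claim is that their coefficients agree --- equivalently, the difference of the two sides is exactly the calculable lower-order remainder (the $R_i$ apparatus described right after the theorem, built from the derivatives of $\mathfrak{F}_n,\mathfrak{G}_n$), which Sections 5 and 6 then evaluate case by case. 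Under that reading your step 2 is essentially the whole proof and coincides with the paper's; under your reading, the statement you set out to prove is not a theorem. The fix is not more cancellation but a weaker target: keep the Lagrange/Vandermonde argument, discard the lemma, and prove coefficient matching rather than equality.
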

\begin{proof}
Define $\frac{a_{i}(q)}{b_{i}(q)}$, $f_i$, and $g_i$ for all $i\in [1,m]$ as outlined in the proof statement. Taking Equation 4.1 at $q = 1$, we have that
\[\frac{a_{i}(1)}{b_{i}(1)} = \frac{f_i a_1(1) + g_i a_2 (1)}{f_i b_1(1) + g_i b_2(1)}\]
Define $a_1 = a_1(1), a_2 = a_2(1), b_1 = b_1(1), b_2 = b_2(1)$. We rewrite the coefficient of $a_1^{(i)}(q)$ in Equation 4.2 as
\[
f_m(f_m b_1 + g_m b_2)^{m-3} = \sum_{i=1}^{m-1}{C_i f_i(f_i b_1 + g_i b_2)^{m-3}}
\]
and the coefficient of $a_2^{(i)}(q)$ in Equation 4.2 as
\[
f_m(f_m b_1 + g_m b_2)^{m-3} = \sum_{i=1}^{m-1}{C_i g_i (f_i b_1 + g_i b_2)^{m-3}}
\]
These equations take the form
\[
(\phi f_m + \psi g_m)(f_m b_1 + g_m b_2)^{m-3} = \sum_{i=1}^{m-1}{(\phi f_m + \psi g_m)C_i(f_i b_1 + g_i b_2)^{m-3}}
\]
in which either $(\phi,\psi) = (1,0)$ or $(\phi,\psi) = (0,1)$. We can re-express the equation above as
\begin{equation*}
\begin{gathered}
\sum_{i=1}^{m-1}{\sum_{j=0}^{m-2}{C_i\left({m-3 \choose j }\phi b_2+{m-3 \choose j - 1}\psi b_1\right)b_1^{j-1}b_2^{m-3-j}f_i^{j}g_i^{m-2-j} }} = \\
\sum_{j=0}^{m-2}{\left({m-3 \choose j }\phi b_2+{m-3 \choose j - 1}\psi b_1\right)b_1^{j-1}b_2^{m-3-j}f_m^{j}g_m^{m-2-j}}
\end{gathered}
\end{equation*}
We look to find a unique solution for $C_1,\ldots,C_{m-1}$ that satisfies the equation above for either choice of $(\phi,\psi)$. Since our lineage is non-vanishing, $b_1 \neq 0$; in no lineage does $b_2 = 0$, and thus the term
\[\left({m-3 \choose j }\phi b_2+{m-3 \choose j - 1}\psi b_1\right)b_1^{j-1}b_2^{m-3-j}\]
cannot be zero for either choice of $(\phi,\psi)$. We conclude that we do not shrink our solution space by cancelling it on both sides. In matrix form, we are left with
\begin{equation*}
\begin{gathered}
\underbrace{\begin{pmatrix}
1 & 1 & 1 & 1 & \cdots & 1 \\
\frac{f_1}{g_1} & \frac{f_2}{g_2} & \frac{f_3}{g_3} & \frac{f_4}{g_4} & \cdots & \frac{f_{m-1}}{g_{m-1}} \\
\frac{f_1^2}{g_1^2} & \frac{f_2^2}{g_2^2} & \frac{f_3^2}{g_3^2} & \frac{f_4^2}{g_4^2} & \cdots & \frac{f_{m-1}^2}{g_{m-1}^2} \\
\vdots & \vdots & \vdots & \ddots & \vdots \\
\frac{f_1^{m-2}}{g_1^{m-2}} & \frac{f_2^{m-2}}{g_2^{m-2}} & \frac{f_2^{m-2}}{g_2^{m-2}} & \frac{f_3^{m-2}}{g_3^{m-2}} & \cdots & \frac{f_{m-1}^{m-2}}{g_{m-1}^{m-2}}
\end{pmatrix}}_{V}\times \\
\begin{pmatrix}
\frac{g_1^{m-2}}{g_m^{m-2}} & 0 & 0 & \cdots & 0 \\
0 & \frac{g_2^{m-2}}{g_m^{m-2}} & 0 & \cdots & 0 \\
0 & 0 & \frac{g_3^{m-2}}{g_m^{m-2}} & \cdots & 0 \\
\vdots & \vdots & \vdots & \ddots & \vdots \\
0 & 0 & 0 & \cdots & \frac{g_{m-1}^{m-2}}{g_m^{m-2}}
\end{pmatrix}\begin{pmatrix}
C_1 \\ C_2 \\ C_3 \\ \vdots \\ C_{m-1}
\end{pmatrix} =
\underbrace{\begin{pmatrix}
1 \\ \frac{f_m}{g_m} \\ \frac{f_m^2}{g_m^2} \\ \vdots \\ \frac{f_m^{m-2}}{g_m^{m-2}}
\end{pmatrix}}_{K}
\end{gathered}
\end{equation*}
Note that $V$ is the transpose of a Vandermonde matrix; the $k$th row of its inverse must be the coefficients of the Lagrange polynomial
\[
L_{k}(x) = L_{0,k} + L_{1,k}x + L_{2,k}x^2 + \cdots + L_{m-2,k}x^{j} = \prod_{\substack{0 \leq n < m \\ n \neq k-1}}{\frac{x-f_n/g_n}{f_{k-1}/g_{k-1}-f_n/g_n}}
\]
We see that
\begin{equation*}
V^{-1}K = \begin{pmatrix}
L_1\left(f_m/g_m\right) \\
L_{2}\left(f_m/g_m\right) \\
L_{3}\left(f_m/g_m\right) \\
\vdots \\
L_{m-1}\left(f_m/g_m\right)
\end{pmatrix}
\end{equation*}
and thus
\begin{equation*}
\begin{gathered}
C_i = \left(\frac{g_m}{g_i}\right)^{m-2}\prod_{\substack{1\leq n < m \\ n \neq i}}{\frac{f_m/g_m-f_n/g_n}{f_i/g_i-f_n/g_n}} = \prod_{\substack{1\leq n < m \\ n \neq i}}{\frac{f_m g_n-f_n g_m}{f_i g_n-f_n g_i}}
\end{gathered}
\end{equation*}
Finally, we must show that this result ensures that $b_1^{(i)}(q)$ and $b_2^{(i)}(q)$ are equal on both sides of Equation 4.2. We rewrite the coefficient of $b_1^{(i)}(q)$ in Equation 4.2 as
\[
f_m(f_m a_1+g_m a_2) = \sum_{i=1}^{m-1}{C_i f_i(f_i a_1 + g_i a_2)}
\]
and the coefficient of $b_2^{(i)}(q)$ in Equation 4.2 as
\[
g_m(f_m a_1+g_m a_2) = \sum_{i=1}^{m-1}{C_i g_i(f_i a_1 + g_i a_2)}
\]
These equations take the form
\[
(\phi f_m + \psi g_m)(f_m a_1 + g_m a_2) = \sum_{i=1}^{m-1}{(\phi f_m + \psi g_m)C_i(f_i a_1 + g_i a_2)}
\]
in which either $\phi = 1, \psi = 0$ or $\phi = 0, \psi = 1$. We can re-express the equation above as
\begin{equation*}
\begin{gathered}
\sum_{i=1}^{m-1}{\left(\phi a_1 f_i^2 + (\phi a_2 + \psi a_1)f_i g_i + \psi a_2 g_i^2\right)} = \phi a_1 f_m^2 + (\phi a_2 + \psi a_1)f_m g_m + \psi a_2 g_m^2
\end{gathered}
\end{equation*}
In matrix form, we have
\begin{equation*}
\begin{gathered}
\begin{pmatrix}
1 & 1 & 1 & 1 & \cdots & 1 \\
\frac{f_1}{g_1} & \frac{f_2}{g_2} & \frac{f_3}{g_3} & \frac{f_4}{g_4} & \cdots & \frac{f_{m-1}}{g_{m-1}} \\
\frac{f_1^2}{g_1^2} & \frac{f_2^2}{g_2^2} & \frac{f_3^2}{g_3^2} & \frac{f_4^2}{g_4^2} & \cdots & \frac{f_{m-1}^2}{g_{m-1}^2}
\end{pmatrix} \times \\
\begin{pmatrix}
\frac{g_1^{m-2}}{g_m^{m-2}} & 0 & 0 & \cdots & 0 \\
0 & \frac{g_2^{m-2}}{g_m^{m-2}} & 0 & \cdots & 0 \\
0 & 0 & \frac{g_3^{m-2}}{g_m^{m-2}} & \cdots & 0
\end{pmatrix}
\begin{pmatrix}
C_1 \\ C_2 \\ C_3
\end{pmatrix} =
\begin{pmatrix}
1 \\ \frac{f_m}{g_m} \\ \frac{f_m^2}{g_m^2}
\end{pmatrix}
\end{gathered}
\end{equation*}
in which we've divided the first row on both sides by $\phi a_1$, the second row on both sides by $\phi a_2 + \psi a_1$, and the third row on both sides by $\psi a_2$. (Since our lineage is non-vanishing, $a_1 \neq 0$; in no lineage does $a_2 = 0$, and thus the terms above cannot be zero for either choice of $(\phi,\psi)$. We conclude that we do not shrink our solution space by cancelling them out on both sides.) We discover that we have already proven that our choices of $C_i$ satisfy this matrix equality, as it is just a restatement of our initial matrix equality over its first three rows. The proof is now complete.
\end{proof}
\noindent I now introduce the concept of a \textit{slab of order }$n$\textit{ starting at }$m$ on the $q$-deformed Stern-Brocot tree.
\begin{definition}
A \textit{slab of order }$n$\textit{ starting at }$m$\textit{ on the }$q$\textit{-deformed Stern-Brocot tree} is the set of $q$-rationals of depth $m$ through $m+n-1$ on the tree. We refer to those $q$-rationals at depth $m+n-1$ as being on the last layer of the slab.
\end{definition}
\noindent Let $j$ be the order of the derivative that we wish to prove satisfies a given identity. Recall that to use the schema, we must already have the follow information:
\newline~
\newline
\textit{Given }$\left[\frac{a}{b}\right]_q=\frac{a(q)}{b(q)}$\textit{, we have}
\[
\left.\frac{d}{dq}a(q)\right|_{q=1}=f_1\left(\frac{a}{b}\right),\ldots,\left.\frac{d^{j-1}}{d^{j-1}q}a(q)\right|_{q=1}=f_{j-1}\left(\frac{a}{b}\right)
\]
\textit{and}
\[
\left.\frac{d}{dq}b(q)\right|_{q=1}=g_1\left(\frac{a}{b}\right),\ldots,\left.\frac{d^{j-1}}{d^{j-1}q}b(q)\right|_{q=1}=g_{j-1}\left(\frac{a}{b}\right)
\]
To prove our identity, we will use the slab of order $j+3$ starting at depth $-1$ on the $q$-deformed Stern-Brocot tree as a base case, and assume the identity holds for the slab of order $j+3$ starting at $m$. We break up the orientation of the $q$-integer on the last layer of the slab relative to its lineage of order $j+3$ into $2^{j+1}$ cases. For example, the four possibilities for $j=1$ are shown below:
\begin{figure}[H]
\begin{floatrow}
\begin{subfigure}[t]{0.5\linewidth}
\begin{center}
\begin{forest}
for tree={alias/.wrap pgfmath arg={a-#1}{id},font=\Large}
[$\frac{a-q^{n}c}{b-q^{n}d}$
[{\large \dots}
[{\large \dots}] 
[{\large \dots}] ]
[$\frac{a}{b}$
[{\large \dots}]
[$\frac{a+q^{n+1}c}{b+q^{n+1}d}$] ] ]
\path (current bounding box.north west) node[below right,font=\Large] (tl) {{\large \dots}}
(tl) foreach \x in {2,3,4} {edge (a-\x)}
(current bounding box.north east) node[below right,font=\Large] (tr) {$\frac{c}{d}$}
(tr) foreach \x in {2,6,8} {edge (a-\x)}
(a-2) foreach \x in {5,7} {edge (a-\x)};
\draw (tr) edge [very thick] (a-2)
(tr) edge [very thick] (a-6)
(tr) edge [very thick] (a-8)
(a-2) edge [very thick] (a-6)
(a-6) edge [very thick] (a-8);
\end{forest}
\end{center}
\end{subfigure}
\begin{subfigure}[t]{0.5\linewidth}
\begin{center}
\begin{forest}
for tree={alias/.wrap pgfmath arg={a-#1}{id},font=\Large}
[$\frac{a-q^{n}c}{b-q^{n}d}$
[{\large \dots}
[{\large \dots}] 
[{\large \dots}] ]
[$\frac{a}{b}$
[$\frac{(q+1)a-q^{n}c}{(q+1)b-q^{n}d}$]
[{\large \dots}] ] ]
\path (current bounding box.north west) node[below right,font=\Large] (tl) {{\large \dots}}
(tl) foreach \x in {2,3,4} {edge (a-\x)}
(current bounding box.north east) node[below right,font=\Large] (tr) {$\frac{c}{d}$}
(tr) foreach \x in {2,6,8} {edge (a-\x)}
(a-2) foreach \x in {5,7} {edge (a-\x)};
\draw (tr) edge [very thick] (a-2)
(tr) edge [very thick] (a-6)
(a-2) edge [very thick] (a-6)
(a-2) edge [very thick] (a-7)
(a-6) edge [very thick] (a-7);
\end{forest}
\end{center}
\end{subfigure}
\end{floatrow}
\end{figure}
\begin{figure}[H]
\begin{floatrow}
\begin{subfigure}[t]{0.5\linewidth}
\begin{center}
\begin{forest}
for tree={alias/.wrap pgfmath arg={a-#1}{id},font=\Large}
[$\frac{a-c}{b-d}$
[$\frac{a}{b}$
[{\large \dots}] 
[$\frac{(q+1)a-qc}{(q+1)b-qd}$] ]
[{\large \dots}
[{\large \dots}]
[{\large \dots}] ] ]
\path (current bounding box.north west) node[below right,font=\Large] (tl) {$\frac{c}{d}$}
(tl) foreach \x in {2,3,4} {edge (a-\x)}
(current bounding box.north east) node[below right,font=\Large] (tr) {{\large \dots}}
(tr) foreach \x in {2,6,8} {edge (a-\x)}
(a-2) foreach \x in {5,7} {edge (a-\x)};
\draw (tl) edge [very thick] (a-2)
(tl) edge [very thick] (a-3)
(a-2) edge [very thick] (a-3)
(a-2) edge [very thick] (a-5)
(a-3) edge [very thick] (a-5);
\end{forest}
\end{center}
\end{subfigure}
\begin{subfigure}[t]{0.5\linewidth}
\begin{center}
\begin{forest}
for tree={alias/.wrap pgfmath arg={a-#1}{id},font=\Large}
[$\frac{a-c}{b-d}$
[$\frac{a}{b}$
[$\frac{qa+c}{qb+d}$] 
[{\large \dots}] ]
[{\large \dots}
[{\large \dots}]
[{\large \dots}] ] ]
\path (current bounding box.north west) node[below right,font=\Large] (tl) {$\frac{c}{d}$}
(tl) foreach \x in {2,3,4} {edge (a-\x)}
(current bounding box.north east) node[below right,font=\Large] (tr) {{\large \dots}}
(tr) foreach \x in {2,6,8} {edge (a-\x)}
(a-2) foreach \x in {5,7} {edge (a-\x)};
\draw (tl) edge [very thick] (a-2)
(tl) edge [very thick] (a-3)
(tl) edge [very thick] (a-4)
(a-2) edge [very thick] (a-3)
(a-3) edge [very thick] (a-4);
\end{forest}
\end{center}
\end{subfigure}
\end{floatrow}
\caption*{Fig. 4.7: All four possible orientations of a lineage of order 4.}
\end{figure}
\noindent (In the bottom two trees above, a factor of $q$ is canceled out from their respective numerators and denominators to yield $a/b$ and $c/d$.) Denote the lineage of order $m$ of the $q$-rational at depth $m+j+2$ as $\frac{p_1(q)}{q_1(q)}$ through $\frac{p_{j+3}(q)}{q_{j+3}(q)}$. Using Theorem 4.2, define $C_1$ through $C_{j+2}$ such that
\begin{equation*}
\sum_{i=1}^{j+2}{C_i\frac{q_{i}(1)^{j+1}}{q_{j+3}(1)^{j+1}}\Delta_j\left(\frac{p_i(q)}{q_i{q}}\right)} = \Delta_j\left(\frac{p_{j+3}(q)}{q_{j+3}{q}}\right)
\end{equation*}
in which $\Delta_j([x]_q)$ is defined as it was in said theorem. For all $i \in [1,j+2]$, we have
\begin{equation*}
\left.\frac{d^j}{d^j q}\frac{p_{i}(q)}{q_{i}(q)}\right|_{q=1}-R_{i}(1) = \Delta_j\left(\frac{p_i(q)}{q_i(q)}\right)
\end{equation*}
in which $R_{i}(q)$ depends on $p_{i}(q),p_{i}'(q),\ldots,p_{i}^{(j-1)}(q)$ and $q_{i}(q),q_{i}'(q),\ldots,q_{i}^{(j-1)}(q)$ but not $p_{i}^{(j)}(q)$ nor $q_{i}^{(j)}(q)$. (This can be verified by expanding out the $j$th derivative of $\frac{p_{i}(q)}{q_{i}(q)}$ at $q=1$.) Because we began the proof with the presumption that we have identities for the first through $(j-1)$th derivatives of both the numerators and denominators of the $q$-rationals at $q=1$, each $R_i(1)$ is calculable; moreover, we have
\begin{equation*}
\left.\frac{d^j}{d^j q}\frac{p_{j+3}(q)}{q_{j+3}(q)}\right|_{q=1}=\sum_{i=1}^{j+2}{C_i\frac{q_{i}(1)^{j+1}}{q_{j+3}(1)^{j+1}}\left(\left.\frac{d^j}{d^j q}\frac{p_{i}(q)}{q_{i}(q)}\right|_{q=1}-R_{i}(1)\right)}+R_{j+3}(q)
\end{equation*}
We can calculate the $j$th derivative of $\frac{p_{i}(q)}{q_{i}(q)}$ at $q = 1$ for all $i \in [1,j+2]$ by the inductive hypothesis, and thus we can confirm the identity for the $j$th derivative of $\frac{p_{j+3}(q)}{q_{j+3}(q)}$ at $q = 1$. Repeating this process for all $2^{N+3}$ orientations of $\frac{p_{j+3}(q)}{q_{j+3}(q)}$, we can then complete the proof.
\newpage
\section{\normalfont \textbf{Theorem} \(\mid\) First Derivative of the $q$-Rationals}
Recall that, using the computer experimentations in Section 3, I theorized the following theorem from Section 1.
\begin{theoremoneone}
Let $x$ be a rational number. The first derivative of its  $q$-deformation $[x]_q$ at $q = 1$ is given by
\[
\frac{d}{dq}\left[x\right]_q\Bigr|_{q=1} = \frac{1}{2}\left(x^2 - x + 1 - f(x)^2\right)
\]
in which $f(x)$ is Thomae's function, namely
\[f(x) = \begin{dcases}
      \frac{1}{b} & x = \frac{a}{b},\mathrm{gcd}(a,b)=1 \\
      0 & x \notin \mathbb{Q}
   \end{dcases}
\]
\end{theoremoneone}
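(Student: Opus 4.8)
The plan is to bypass the full slab machinery of Section 4 and argue directly from the two defining recurrences of Section 1, since for a first derivative the quantity $\Delta_1\left(\alpha(q)/\beta(q)\right)$ of Theorem 4.2 is, by the quotient rule, exactly $\frac{d}{dq}\frac{\alpha(q)}{\beta(q)}\big|_{q=1}$. Writing $D(x) \coloneqq \frac{d}{dq}[x]_q\big|_{q=1}$, I would first differentiate $[x+1]_q = q[x]_q + 1$ and $[-1/x]_q = -q^{-1}/[x]_q$ in $q$, evaluate at $q=1$, and use $[x]_1 = x$ to obtain the functional equations
\[
D(x+1) = x + D(x), \qquad D\left(-\tfrac{1}{x}\right) = \frac{1}{x} + \frac{D(x)}{x^{2}}.
\]
Since $T\colon x \mapsto x+1$ and $S\colon x \mapsto -1/x$ generate the action on $\mathbb{Q}\cup\{\infty\}$, these two relations determine $D$ on all of $\mathbb{Q}$ from a single base value.

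Next I would fix the base case on the integers: from $[n]_q = 1 + q + \cdots + q^{n-1}$ one reads off $D(n) = 1 + 2 + \cdots + (n-1) = \tfrac{1}{2}n(n-1)$, which matches the conjectured formula because $f(n) = 1$ gives $\tfrac{1}{2}(n^2 - n + 1 - f(n)^2) = \tfrac{1}{2}n(n-1)$. I would then check that the candidate $\Phi(x) \coloneqq \tfrac{1}{2}(x^2 - x + 1 - f(x)^2)$ satisfies both functional equations. The $T$-relation follows from the translation invariance $f(x+1) = f(x)$ and a one-line expansion. For the $S$-relation, writing $x = a/b$ in lowest terms, the reduced denominator of $-1/x = -b/a$ is $|a|$, so $f(-1/x)^2 = 1/a^2$, and a short computation shows both sides equal $\tfrac{1}{2}(b^2/a^2 + b/a + 1 - 1/a^2)$.

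Finally I would close by well-founded induction on $\mathbb{Q}$: every reduced $a/b$ is driven to an integer by finitely many applications of $T^{\pm 1}$ and $S$ (the continued-fraction descent, i.e.\ a walk up the Stern-Brocot tree of Section 2), and at each step the two verified functional equations transport the equality $\Phi = D$ from the smaller instance to the larger. I expect the only real obstacle to be bookkeeping rather than conceptual: controlling Thomae's function under $S$, where the numerator and denominator of $x$ exchange roles, so that the $f(x)^2$ term transforms correctly — this is precisely where the corrective $-f(x)^2$, rather than a bare quadratic, is forced. For alignment with the paper I would also note that the same conclusion can be reached through the $j = 1$ instance of the Section 4 schema, with the four orientations of Fig. 4.7 supplying the inductive cases; the recurrence computation then serves as an independent check on the slab induction.
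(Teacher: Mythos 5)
Your proposal is correct, and it takes a genuinely different route from the paper. The paper proves Theorem 1.1 by the Section 4 schema: it inducts over slabs of order $4$ on the $q$-deformed Stern--Brocot tree, invokes Theorem 4.2 (whose Vandermonde/Lagrange argument produces the coefficients $C_1=2$, $C_2=2$, $C_3=-1$), and checks the four orientation cases of Fig.\ 4.7, each time combining the inductive hypothesis for $a/b$, $c/d$, $(a-c)/(b-d)$ with an explicitly computed residual term $1/(b+d)^2$. You instead differentiate the two defining relations $[x+1]_q=q[x]_q+1$ and $[-1/x]_q=-q^{-1}/[x]_q$ to get the functional equations $D(x+1)=x+D(x)$ and $D(-1/x)=\tfrac{1}{x}+\tfrac{D(x)}{x^2}$, verify that the candidate $\Phi(x)=\tfrac12(x^2-x+1-f(x)^2)$ satisfies both (your two computations check out, including the sign-insensitive handling of $f(-1/x)^2=1/a^2$ via the square), match the integer base case $D(n)=\tfrac12 n(n-1)$, and close by Euclidean descent on the denominator: each relation transports the equality $D=\Phi$ in both directions since it is affine with nonzero slope, and $T^{-n}$ followed by $S$ strictly decreases the reduced denominator, terminating at the integers. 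This is shorter and more elementary, requiring none of the lineage/weight machinery; what the paper's heavier route buys is reusability, since the identical slab-induction template, with Theorem 4.2 supplying coefficients for lineages of order $5$, is what carries the much harder second-derivative proof of Section 6 (where a functional-equation approach would have to manage the transformation of Dedekind sums under $x\mapsto -1/x$ by hand). One small caution if you write this up: your parenthetical identification of the descent with "a walk up the Stern--Brocot tree" is fine heuristically, but the clean induction variable is the reduced denominator $b$, not tree depth, and you should state explicitly that the $S$-step is legitimate because $r=x-\lfloor x\rfloor\neq 0$ whenever $b\geq 2$.
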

\noindent Adam S. Sikora independently formulated that $[x]_q$ has the power series
\[x + \left(x^2-x+\frac{1}{2}-\frac{1}{2m^2}\right)h + O(h^2)\]
for $x = n + 1/m$ and $q = e^h$, in which $n,m \in \mathbb{Z}_{>0}$, in Remark 22 at the bottom of Page 17 of his paper, ``Tangle Equations, the Jones conjecture, and quantum continued fractions" \cite{sikora2020tangle}. This observation boils down to my first derivative identity in the case of $x = n + 1/m$, in which again $n,m \in \mathbb{Z}_{>0}$.
\newline~
\newline
I follow the structure given in Section 4 to prove Theorem 1.1 via induction. One can easily verify that this identity holds for the slab of order $4$ starting at $-1$  on the $q$-deformed Stern-Brocot tree, which we take as a base case. We now assume that Theorem 1.1 holds for every $q$-rational in the slab of order $4$ starting at $i$ on the $q$-deformed Stern-Brocot tree, and denote a given $q$-rational on its last layer $a/b$. We wish to prove that this identity holds for every $q$-rational in the slab of order $4$ starting at $i+1$, which boils down to proving that it holds for any given child of $a/b$. Following the proof structure further, we take $c/d$ to be at depth $i+1$, and break up the alignment of $a/b$, $c/d$, and our child of $a/b$ into four cases.
\newline~
\newline
\underline{Case 1}
\newline
In Cases 1 and 2, $bc - ad = 1$.
\begin{center}
\begin{forest}
for tree={alias/.wrap pgfmath arg={a-#1}{id},font=\Large}
[$\frac{a-q^{n}c}{b-q^{n}d}$
[{\large \dots}
[{\large \dots}] 
[{\large \dots}] ]
[$\frac{a}{b}$
[{\large \dots}]
[$\frac{a+q^{n+1}c}{b+q^{n+1}d}$] ] ]
\path (current bounding box.north west) node[below right,font=\Large] (tl) {{\large \dots}}
(tl) foreach \x in {2,3,4} {edge (a-\x)}
(current bounding box.north east) node[below right,font=\Large] (tr) {$\frac{c}{d}$}
(tr) foreach \x in {2,6,8} {edge (a-\x)}
(a-2) foreach \x in {5,7} {edge (a-\x)};
\draw (tr) edge [very thick] (a-2)
(tr) edge [very thick] (a-6)
(tr) edge [very thick] (a-8)
(a-2) edge [very thick] (a-6)
(a-6) edge [very thick] (a-8);
\end{forest}
\end{center}
We start by taking
\begin{equation*}
\begin{gathered}
\left.\frac{d}{dq}\frac{a+q^{n+1}c}{b+q^{n+1}d}\right|_{q=1}-2\left(\frac{b}{b+d}\right)^2\left.\frac{d}{dq}\frac{a}{b}\right|_{q=1}-2\left(\frac{d}{b+d}\right)^2\left.\frac{d}{dq}\frac{c}{d}\right|_{q=1}+\\
\left(\frac{b-d}{b+d}\right)^2\left.\frac{d}{dq}\frac{a-q^nc}{b-q^nd}\right|_{q=1}=\frac{1}{(b+d)^2}
\end{gathered}
\end{equation*}
Since all vanishing lineages of order 4 were covered in the base case, we know that the lineage of $(a+q^{n+1}c)/(b+q^{n+1}d)$ of order 4 is non-vanishing and may apply Theorem 4.2 to obtain the coefficients $C_1 = 2$, $C_2 = 2$, and $C_3 = -1$. Plugging in the identity for $a/b$, $c/d$, and $(a-c)/(b-d)$, we have
\begin{equation*}
\begin{gathered}
2\left(\frac{b}{b+d}\right)^2\left.\frac{d}{dq}\frac{a}{b}\right|_{q=1}+2\left(\frac{d}{b+d}\right)^2\left.\frac{d}{dq}\frac{c}{d}\right|_{q=1}-\left(\frac{b-d}{b+d}\right)^2\left.\frac{d}{dq}\frac{a-q^nc}{b-q^nd}\right|_{q=1}+\\
\frac{1}{(b+d)^2}=\frac{1}{2}\left(\left(\frac{a+c}{b+d}\right)^2-\frac{a+c}{b+d}+1-\frac{1}{(b+d)^2}\right)=\\
\frac{1}{2}\left(\left(\frac{a+c}{b+d}\right)^2-\frac{a+c}{b+d}+1-f\left(\frac{a+c}{(b+d)}\right)^2\right)
\end{gathered}
\end{equation*}
exactly corresponding to the equation in Theorem 1.1 for $x = (a+c)/(b+d)$. We prove our three remaining cases similarly.
\newline~
\newline
\underline{Case 2}
\newline
Recall that $bc - ad = 1$.
\begin{center}
\begin{forest}
for tree={alias/.wrap pgfmath arg={a-#1}{id},font=\Large}
[$\frac{a-q^{n}c}{b-q^{n}d}$
[{\large \dots}
[{\large \dots}] 
[{\large \dots}] ]
[$\frac{a}{b}$
[$\frac{(q+1)a-q^{n}c}{(q+1)b-q^{n}d}$]
[{\large \dots}] ] ]
\path (current bounding box.north west) node[below right,font=\Large] (tl) {{\large \dots}}
(tl) foreach \x in {2,3,4} {edge (a-\x)}
(current bounding box.north east) node[below right,font=\Large] (tr) {$\frac{c}{d}$}
(tr) foreach \x in {2,6,8} {edge (a-\x)}
(a-2) foreach \x in {5,7} {edge (a-\x)};
\draw (tr) edge [very thick] (a-2)
(tr) edge [very thick] (a-6)
(a-2) edge [very thick] (a-6)
(a-2) edge [very thick] (a-7)
(a-6) edge [very thick] (a-7);
\end{forest}
\end{center}
\begin{equation*}
\begin{gathered}
\left.\frac{d}{dq}\frac{(q+1)a-q^{n}c}{(q+1)b-q^{n}d}\right|_{q=1}-2\left(\frac{b}{b+d}\right)^2\left.\frac{d}{dq}\frac{a}{b}\right|_{q=1}+\left(\frac{d}{b+d}\right)^2\left.\frac{d}{dq}\frac{c}{d}\right|_{q=1}-\\
2\left(\frac{b-d}{b+d}\right)^2\left.\frac{d}{dq}\frac{a-q^nc}{b-q^nd}\right|_{q=1}=\frac{1}{(b+d)^2}
\end{gathered}
\end{equation*}
Plugging in the identity for $a/b$, $c/d$, and $(a-c)/(b-d)$, we have
\begin{equation*}
\begin{gathered}
2\left(\frac{b}{b+d}\right)^2\left.\frac{d}{dq}\frac{a}{b}\right|_{q=1}-\left(\frac{d}{b+d}\right)^2\left.\frac{d}{dq}\frac{c}{d}\right|_{q=1}+2\left(\frac{b-d}{b+d}\right)^2\left.\frac{d}{dq}\frac{a-q^nc}{b-q^nd}\right|_{q=1}+\\
\frac{1}{(b+d)^2}=\frac{1}{2}\left(\left(\frac{2a-c}{2b-d}\right)^2-\frac{2a-c}{2b-d}+1-\frac{1}{(2b-d)^2}\right)=\\
\frac{1}{2}\left(\left(\frac{2a-c}{2b-d}\right)^2-\frac{2a-c}{2b-d}+1-f\left(\frac{2a-c}{2b-d}\right)^2\right)
\end{gathered}
\end{equation*}
exactly corresponding to the equation in Theorem 1.1 for $x = (2a-c)/(2b-d)$.
\newline~
\newline
\underline{Case 3}
\newline
In Cases 3 and 4, $ad - bc = 1$.
\begin{center}
\begin{forest}
for tree={alias/.wrap pgfmath arg={a-#1}{id},font=\Large}
[$\frac{a-c}{b-d}$
[$\frac{a}{b}$
[{\large \dots}] 
[$\frac{(q+1)a-qc}{(q+1)b-qd}$] ]
[{\large \dots}
[{\large \dots}]
[{\large \dots}] ] ]
\path (current bounding box.north west) node[below right,font=\Large] (tl) {$\frac{c}{d}$}
(tl) foreach \x in {2,3,4} {edge (a-\x)}
(current bounding box.north east) node[below right,font=\Large] (tr) {{\large \dots}}
(tr) foreach \x in {2,6,8} {edge (a-\x)}
(a-2) foreach \x in {5,7} {edge (a-\x)};
\draw (tl) edge [very thick] (a-2)
(tl) edge [very thick] (a-3)
(a-2) edge [very thick] (a-3)
(a-2) edge [very thick] (a-5)
(a-3) edge [very thick] (a-5);
\end{forest}
\end{center}
(In the tree above, a factor of $q$ is canceled out from their respective numerators and denominators to yield $a/b$ and $c/d$.)
\begin{equation*}
\begin{gathered}
\left.\frac{d}{dq}\frac{(q+1)a-qc}{(q+1)b-qd}\right|_{q=1}-2\left(\frac{b}{b+d}\right)^2\left.\frac{d}{dq}\frac{a}{b}\right|_{q=1}+\left(\frac{d}{b+d}\right)^2\left.\frac{d}{dq}\frac{c}{d}\right|_{q=1}-\\
2\left(\frac{b-d}{b+d}\right)^2\left.\frac{d}{dq}\frac{a-c}{b-d}\right|_{q=1}=\frac{1}{(b+d)^2}
\end{gathered}
\end{equation*}
Plugging in the identity for $a/b$, $c/d$, and $(a-c)/(b-d)$, we obtain the exact calculation as that which appeared at the end of Case 2.
\newline~
\newline
\underline{Case 4}
\newline
Recall that $ad - bc = 1$.
\begin{center}
\begin{forest}
for tree={alias/.wrap pgfmath arg={a-#1}{id},font=\Large}
[$\frac{a-c}{b-d}$
[$\frac{a}{b}$
[$\frac{qa+c}{qb+d}$] 
[{\large \dots}] ]
[{\large \dots}
[{\large \dots}]
[{\large \dots}] ] ]
\path (current bounding box.north west) node[below right,font=\Large] (tl) {$\frac{c}{d}$}
(tl) foreach \x in {2,3,4} {edge (a-\x)}
(current bounding box.north east) node[below right,font=\Large] (tr) {{\large \dots}}
(tr) foreach \x in {2,6,8} {edge (a-\x)}
(a-2) foreach \x in {5,7} {edge (a-\x)};
\draw (tl) edge [very thick] (a-2)
(tl) edge [very thick] (a-3)
(tl) edge [very thick] (a-4)
(a-2) edge [very thick] (a-3)
(a-3) edge [very thick] (a-4);
\end{forest}
\end{center}
(In the tree above, a factor of $q$ is canceled out from their respective numerators and denominators to yield $a/b$ and $c/d$.)
\begin{equation*}
\begin{gathered}
\left.\frac{d}{dq}\frac{qa+c}{qb+d}\right|_{q=1}-2\left(\frac{b}{b+d}\right)^2\left.\frac{d}{dq}\frac{a}{b}\right|_{q=1}-2\left(\frac{d}{b+d}\right)^2\left.\frac{d}{dq}\frac{c}{d}\right|_{q=1}+\\
\left(\frac{b-d}{b+d}\right)^2\left.\frac{d}{dq}\frac{a-c}{b-d}\right|_{q=1}=\frac{1}{(b+d)^2}
\end{gathered}
\end{equation*}
Plugging in the identity for $a/b$, $c/d$, and $(a-c)/(b-d)$, we obtain the exact calculation as that which appeared at the end of Case 1. This completes the proof.
\section{\normalfont \textbf{Theorem} \(\mid\) Second Derivative of the $q$-Rationals}
Recall that, using the computer experimentations in Section 3, I theorized the following theorem from Section 1.
\begin{theoremonetwo}
Let $\frac{a}{b}$ be an irreducible rational number. The second derivative of its $q$-deformation $\left[\frac{a}{b}\right]_q$ at $q = 1$ is given by
\[
\frac{d^2}{dq^2}\left[\frac{a}{b}\right]_q\Bigr|_{q=1} = F\left(\frac{a}{b}\right) + f\left(\frac{a}{b}\right)^2G\left(\frac{a}{b}\right)+20\sum_{n=1}^{b-1}{\left\langle\frac{n}{a}\right\rangle_ b H\left(\frac{n}{b}\right)}
\]
in which
\begin{itemize}
\item $f(x)$ is Thomae's function (as given in Theorem 1.1)
\item \(F(x) = \frac{1}{3}x^3-x^2+\frac{5}{3}x-1\)
\item \(G(x) = 1 - x\)
\item \(H(x) = x^2(1 - x)\)
\item \(\left\langle\frac{n}{a}\right\rangle_b \coloneqq \frac{na^{-1}\:\mathrm{mod}\:b}{b}-\frac{1}{2}\)
\newline
where $a^{-1}$ is the inverse of $a$ in the multiplicative group \(\left(\mathbb{Z}\slash b\mathbb{Z}\right)^\times\) and \(x\:\mathrm{mod}\:b\) is the integer between $0$ and $b-1$ corresponding to the congruence class of $x$ modulo $b$.
\end{itemize}
\end{theoremonetwo}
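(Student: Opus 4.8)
The plan is to instantiate the arbitrary-order schema of Section 4 at $j = 2$. The schema demands, as input, closed forms for the first derivatives of the numerator and denominator polynomials taken separately, namely weights $f_1(a/b) = a'(1)$ and $g_1(a/b) = b'(1)$ where $[a/b]_q = a(q)/b(q)$. Theorem 1.1 gives only the derivative of the quotient $a(q)/b(q)$, which is a single linear relation between $a'(1)$ and $b'(1)$, so I would first pin down $f_1$ and $g_1$ individually by an auxiliary induction on depth, differentiating the weighted Farey recurrence $a(q) = a_{\mathrm{prev}}(q) + q^{\xi} a_{\mathrm{other}}(q)$ (and likewise for $b(q)$) at $q = 1$. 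This auxiliary step is substantial in its own right, because the separated derivatives $a'(1)$ and $b'(1)$ carry the modular-inverse data $\langle n/a\rangle_b$ that is invisible in the quotient derivative of Theorem 1.1 but drives the new term at second order. I would record this as Lemma 6.1 and defer its more technical half to Appendix B.

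With $f_1$ and $g_1$ in hand, I run the main induction on slabs of order $j + 3 = 5$. The base case is the slab starting at depth $-1$, checked by direct computation; I then assume the identity on the slab starting at $i$ and prove it on the slab starting at $i + 1$, which amounts to verifying it for an arbitrary child of a last-layer $q$-rational $a/b$. This splits into $2^{j+1} = 8$ orientation cases according to how the intervening $q$-integer sits within the lineage of order $5$. In each case Theorem 4.2 with $m = 5$ supplies explicit coefficients $C_1,\dots,C_4$ expressing $\Delta_2$ of the child as a weighted combination (with the denominator ratios $(b_i/b)^{3}$) of $\Delta_2$ of its four ancestors, just as the coefficients $C_1 = 2$, $C_2 = 2$, $C_3 = -1$ arose for $j = 1$ in Section 5. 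The correction terms $R_i(1)$ are built only from the already-known lower-order data $f_1, g_1$ and the values $a, b$, so the second derivative of the child is fully determined by the inductive hypothesis.

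Splitting the conjectured right-hand side into its three parts, the polynomial piece $F$ and the Thomae piece $f^2 G$ are handled exactly as the quadratic and $f^2$ terms were in Section 5: they are rational functions of $a, b, c, d$ and reduce, after applying Theorem 4.2, to the same expression evaluated at the child. The genuinely hard part is the arithmetic sum $20\sum_{n=1}^{b-1}\langle n/a\rangle_b H(n/b)$. I must show that the $C_i$-weighted combination of the ancestors' sums reproduces the child's sum under a single Farey step, and this is delicate precisely because $\langle n/a\rangle_b$ is governed by $a^{-1} \bmod b$, whose behavior under $a \mapsto a \pm c$ and $b \mapsto b \pm d$ is a reciprocity-type statement rather than a formal rearrangement. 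Establishing this transformation law for the Dedekind-type sum, and then confirming it across all eight orientations, is where I expect essentially all of the difficulty to lie; it is the natural content of the supplementary lemma proved in Appendix B.
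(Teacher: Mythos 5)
Your skeleton matches the paper's: the schema of Section~4 at $j=2$, the auxiliary lemma giving the numerator's first derivative separately (which the paper states as Lemma~6.1, proves in Appendix~B by the same Farey-recurrence induction you describe, and whose closed form $\frac{1}{2}(D(a/b)\,b + a^{-1} - a)$ indeed carries the modular-inverse data), the base case at depth $-1$, the eight orientation cases, and Theorem~4.2 supplying the coefficients $C_1,\dots,C_4$. But your proposal stops exactly where the proof's real content begins. You correctly observe that the term $20\sum_{n=1}^{b-1}\langle n/a\rangle_b H(n/b)$ obeys a ``reciprocity-type statement rather than a formal rearrangement'' under a Farey step, and then you defer that statement to a hypothetical appendix lemma without any mechanism for proving it. That deferred lemma is not a supplementary technicality; it is essentially the whole theorem, and an appendix promising it is a gap, not a proof.

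The paper closes this gap with a specific, citable tool that your proposal never identifies. First it converts the arithmetic sum into a generalized Dedekind sum in the sense of Choi, Jun, Lee, and Lim: $\sum_{n=1}^{b-1}\langle n/a\rangle_b H(n/b) = -s_{1,3}(a,b)$, using $\langle n/a\rangle_b = \overline{B}_1(an/b)$ and a symmetry argument to trade $H$ for $-\overline{B}_3$. Then, in each of the eight cases, the inductive step is reduced to the vanishing of an explicit linear form $v\cdot\omega = 0$, where $\omega$ collects the sums $s_{1,3}$ attached to the child and its ancestors (together with $s_{2,2}$, $s_{3,1}$, and a constant entry) and $v$ has polynomial entries in $b,d$. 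The vanishing is established by instantiating the Choi--Jun--Lee--Lim reciprocity formula (the paper's Equation~6.2, combined with their identities 6.1 and 6.3--6.6) at several parameter pairs $(p,q)$ -- e.g.\ $p=b$ with $q = b+d,\ b-d,\ d,\ b-2d$ in Case~1 -- each instance yielding a relation $u_k\cdot\omega = 0$, and then exhibiting $v$ as an explicit integer combination such as $v_1 = -120u_1+360u_2+720u_3-60u_4$. This also requires the case-by-case bookkeeping of modular inverses ($a^{-1}=b-d$, $c^{-1}=b-(n+1)d$, $(a+c)^{-1}=b$, etc.), which is what makes the reciprocity instances line up with the entries of $\omega$; and Cases 5--8 are then obtained from Cases 1--4 by the reflection identity $s_{1,3}(q-p,q)=-s_{1,3}(p,q)$ rather than by fresh computation. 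Without importing (or reproving) such a reciprocity formula, your plan has no way to verify the inductive step, so the proposal as written does not constitute a proof.
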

\noindent Throughout the proof, when working with fractions denoted by $a/b$, $c/d$, and so forth, I refer to the inverse of $a$ in the multiplicative group \(\left(\mathbb{Z}\slash b\mathbb{Z}\right)^\times\) as $a^{-1}$, the inverse of $c$ in the multiplicative group \(\left(\mathbb{Z}\slash d\mathbb{Z}\right)^\times\) as $c^{-1}$, and so forth.
\newline~
\newline
Recall how in our proof structure, we needed the lower-order derivatives of both the numerators and denominators of the quantum rationals. I will prove an identity for the first derivatives of the numerators of the quantum rationals as a lemma; since we've already proven the first derivatives of the quantum rationals, the first derivatives of their denominators follows directly from this lemma as a corollary.
\begin{lemma}
Given $\left[\frac{a}{b}\right]_q=\frac{a(q)}{b(q)}$, we have that
\[
\frac{d}{dq}a(q)\Bigr|_{q=1}=\frac{1}{2}\left(D\left(\frac{a}{b}\right)b+a^{-1}-a\right)
\]
in which $D(a/b)$ is the depth of $a/b$ on the Stern-Brocot tree and $a^{-1}$ is the inverse of $a$ in the multiplicative group \(\left(\mathbb{Z}\slash b\mathbb{Z}\right)^\times\).
\end{lemma}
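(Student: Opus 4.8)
The plan is to prove the lemma by induction on the depth $D(a/b)$ of $a/b$ on the Stern--Brocot tree, exploiting the weighted Farey-sum recursion that generates the $q$-deformed tree. Writing $[a/b]_q = a(q)/b(q)$, every non-integer $q$-rational arises as a weighted Farey sum of its two parents $a_L(q)/b_L(q)$ and $a_R(q)/b_R(q)$, so its numerator satisfies $a(q) = a_L(q) + q^{n} a_R(q)$ for the exponent $n$ prescribed in Section~2 (the order-difference of the parent denominators, or $1$). Differentiating at $q=1$ gives
\[
\left.\frac{d}{dq}a(q)\right|_{q=1}
= \left.\frac{d}{dq}a_L(q)\right|_{q=1}
+ \left.\frac{d}{dq}a_R(q)\right|_{q=1}
+ n\,a_R(1),
\]
so the derivative of the child's numerator is the sum of the parents' derivatives plus the correction $n\,a_R(1)$, where $a_R(1)$ is the ordinary numerator of the parent carrying the $q$-power. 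This is the engine of the induction, and it is exactly the first-derivative specialization of the numerator recursion underlying the proof schema of Section~4.

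The key idea I would use to produce the arithmetic term $a^{-1}$ is to reinterpret it geometrically. If $c/d$ is the Farey neighbor of $a/b$ with $ad - bc = 1$, then $ad \equiv 1 \pmod{b}$, so the reduced residue $a^{-1}\bmod b$ is precisely $d \bmod b$; that is, the modular inverse in the statement is nothing but the denominator of the appropriate unimodular neighbor on the tree. I would therefore carry the neighbor data $(c,d)$ alongside $(a,b)$ through the induction, so that the whole right-hand side $\tfrac12\big(D(a/b)\,b + a^{-1} - a\big)$ is expressed in purely tree-theoretic quantities. Farey addition updates $(a,b,c,d)$ in a controlled way and increments the depth by one at each step, which is what should make the term $D(a/b)\,b$ accumulate correctly.

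With these two ingredients the inductive step becomes a bookkeeping computation: substitute the inductive hypothesis for the two parents' numerator derivatives, apply the recursion above, and match against $\tfrac12\big(D\,b + a^{-1} - a\big)$. I expect the main obstacle to be twofold. First, one must pin down the exponent $n$ in each parent orientation (the four cases of $\xi_n$ from Section~4), since the correction $n\,a_R(1)$ is where both the depth count and the neighbor denominator enter; forcing the telescoping of these corrections to collapse to $\tfrac12 D b$ plus the neighbor contribution is the delicate part. Second, the bookkeeping must respect two normalizations: the statement uses the reduced representative $a^{-1}\in\{0,\dots,b-1\}$, whereas the neighbor denominator produced by the recursion need only satisfy $d \equiv a^{-1}\pmod b$, so the multiple-of-$b$ discrepancy has to be absorbed into the depth term; and in the orientations where a factor of $q$ is canceled to reach lowest terms, differentiating before versus after cancellation shifts the result by the ordinary numerator, which is the natural source of the $-a$ appearing in the formula. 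Keeping these corrections aligned across the case analysis is where the argument must be handled with care.

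Finally, I would dispatch the base case (the shallow layers of the tree, i.e.\ small denominators, where $a(q)$ is an explicit short polynomial) by direct computation, and I would obtain the companion statement for denominators as a corollary: combining this lemma with Theorem~1.1 through the quotient-rule identity
\[
\left.\frac{d}{dq}a(q)\right|_{q=1}\,b - a\left.\frac{d}{dq}b(q)\right|_{q=1}
= \tfrac12\big(a^2 - ab + b^2 - 1\big)
\]
immediately solves for $\left.\tfrac{d}{dq}b(q)\right|_{q=1}$, as anticipated in the discussion preceding the lemma.
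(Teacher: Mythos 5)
Your proposal is correct and follows essentially the same route as the paper's Appendix~B proof: an induction over the Stern--Brocot tree that differentiates the weighted Farey-sum recursion at $q=1$, splits into the same four orientation cases used for Theorem~1.1, identifies $a^{-1}$ with the unimodular neighbor's denominator ($d$ or $b-d$ according to the sign of $ad-bc$), tracks depth through relations such as $D\left((a+c)/(b+d)\right)=D(a/b)+1$ and $D(a/b)=D(c/d)+n-1$, and obtains the companion statement for the other component as a corollary of Theorem~1.1 via the quotient-rule identity, exactly as the paper does. The one caveat is a discrepancy internal to the paper rather than a flaw in your plan: although the lemma is phrased for the numerator $a(q)$, the paper's own verification differentiates the denominator recursion $b(q)+q^{n+1}d(q)$ (and its formula only matches direct computation under that reading, with the depth normalized as the number of Farey sums), so when you dispatch your base case by explicit computation you will be forced to fix this numerator/denominator and depth convention before your induction closes.
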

\noindent A proof of this lemma is given in Appendix B.
\begin{corollary}
Given $\left[\frac{a}{b}\right]_q=\frac{a(q)}{b(q)}$, we have that
\[
\frac{d}{dq}b(q)\Bigr|_{q=1}=\frac{1}{2a}\left(1-a^2+ba^{-1}+b^2\left(1-D\left(\frac{a}{b}\right)\right)\right)
\]
in which $D(a/b)$ is the depth of $a/b$ on the Stern-Brocot tree and $a^{-1}$ is the inverse of $a$ in the multiplicative group \(\left(\mathbb{Z}\slash b\mathbb{Z}\right)^\times\).
\end{corollary}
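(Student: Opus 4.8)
The statement is a corollary, so the plan is to extract $b'(1) \coloneqq \frac{d}{dq}b(q)\big|_{q=1}$ purely algebraically from the two results already in hand, namely Theorem 1.1 for the derivative of the whole $q$-rational and Lemma 6.1 for the derivative of its numerator. The bridge between them is the quotient rule. Writing $[a/b]_q = a(q)/b(q)$ with $a(1) = a$ and $b(1) = b$ (which holds because the $q$-deformed Stern--Brocot tree specializes to the ordinary tree at $q = 1$, and because the $q$-deformation of an irreducible fraction produces coprime polynomials, so the displayed quotient is already in lowest terms), differentiation at $q = 1$ gives
\[
\frac{d}{dq}\left[\frac{a}{b}\right]_q\Bigr|_{q=1} = \frac{a'(1)\,b - a\,b'(1)}{b^2}.
\]
Every quantity here except $b'(1)$ is known, so the whole task reduces to solving this single linear equation.

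First I would replace the left-hand side using Theorem 1.1, noting that $f(a/b) = 1/b$ for irreducible $a/b$, which yields
\[
\frac{a'(1)\,b - a\,b'(1)}{b^2} = \frac{1}{2}\left(\frac{a^2}{b^2} - \frac{a}{b} + 1 - \frac{1}{b^2}\right).
\]
Multiplying through by $b^2$ collapses this to $a'(1)\,b - a\,b'(1) = \tfrac{1}{2}(a^2 - ab + b^2 - 1)$. Next I would substitute $a'(1) = \tfrac{1}{2}\big(D(a/b)\,b + a^{-1} - a\big)$ from Lemma 6.1, so that $a'(1)\,b$ becomes $\tfrac{1}{2}\big(D(a/b)\,b^2 + a^{-1}b - ab\big)$. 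Isolating $a\,b'(1)$ and then dividing by $a$ leaves only routine simplification: the $\pm ab$ terms cancel, and collecting the survivors --- the constant $1 - a^2$, the term $b a^{-1}$, and the two $b^2$ contributions carrying the depth --- produces the asserted closed form.

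There is no deep obstacle here; the care lies entirely in the final bookkeeping. Three points deserve attention. First, the quotient rule contributes $a'(1)\,b$ and $a\,b'(1)$ with opposite signs, and Lemma 6.1 carries its own signs, so the depth term $b^2\big(1 - D(a/b)\big)$ is exactly where an arithmetic slip is most likely; I would track it explicitly and confirm the outcome against a small case such as $a/b = 2/3$. Second, the last step divides by $a$, which is legitimate only for $a \neq 0$; the excluded fraction $0/1$ is an integer $q$-rational with constant denominator polynomial, whose derivative is $0$ and can be checked by inspection, so I would either assume $a \neq 0$ or dispose of that case separately. Third, I would make explicit the coprimality of $a(q)$ and $b(q)$, since the clean form of the quotient rule above depends on there being no common $q$-factor to differentiate through.
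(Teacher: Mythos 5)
Your approach is exactly the paper's: the paper's entire proof of this corollary is the single remark that, given the closed-form identity for $\frac{d}{dq}[x]_q$ at $q=1$, the statement ``follows naturally from the lemma'' --- i.e.\ precisely the quotient-rule extraction you describe. The gap is in your final step. Carrying out the bookkeeping you call routine does \emph{not} produce the asserted closed form. From Theorem 1.1 and the quotient rule,
\[
a'(1)\,b - a\,b'(1) = \tfrac{1}{2}\left(a^2 - ab + b^2 - 1\right),
\]
and substituting $a'(1)\,b = \tfrac{1}{2}\left(D\left(\tfrac{a}{b}\right)b^2 + a^{-1}b - ab\right)$ from Lemma 6.1 gives
\[
a\,b'(1) = \tfrac{1}{2}\left(1 - a^2 + b\,a^{-1} + b^2\left(D\left(\tfrac{a}{b}\right) - 1\right)\right),
\]
so the depth term comes out as $b^2\left(D(a/b)-1\right)$, the \emph{negative} of the corollary's $b^2\left(1-D(a/b)\right)$. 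The two $b^2$ contributions you point to ($+D b^2$ from the lemma, $-b^2$ from the theorem) cannot combine to $b^2(1-D)$. So the proof as proposed either establishes a statement different from the printed corollary, or must flip a sign silently to force agreement; either way it does not go through as claimed.

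The sanity check you proposed but did not perform would have caught this. Take $a/b = 2/3$, so $a(q)/b(q) = (q+q^2)/(1+q+q^2)$ and $b'(1) = 3$. The depth normalization under which Lemma 6.1 holds for this fraction forces $D(2/3) = 2$ (with Definition 2.1's value $D=1$ the lemma already fails, returning $3/2$ for a numerator derivative equal to $3$). With $D = 2$, the formula derived above gives $\tfrac{1}{4}(1 - 4 + 6 + 9) = 3$, which is correct, while the corollary as printed gives $\tfrac{1}{4}(1 - 4 + 6 - 9) = -\tfrac{3}{2}$, which is not. In other words: your setup is sound, and what you have actually uncovered is that Lemma 6.1 and Corollary 6.2 as printed are mutually inconsistent under your own quotient-rule identity --- they differ by exactly this sign. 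A blind proof that claims to land on the printed formula by ``routine simplification'' therefore contains an error at precisely the step you flagged as the dangerous one; the careful tracking of the depth term was not optional, it was the entire content of the exercise.
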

\noindent Since we already have a closed-form identity for $\frac{d}{dq}[x]_q$ for all $x \in \mathbb{Q}$, the corollary above follows naturally from the lemma.
\newline~
\newline
Before we prove Theorem 1.2, we also cite two formulas proven by Dohoon Choi, Byungheup Jun, Jungyun Lee, and Subong Lim \cite{choi} that will be instrumental in verifying the values of the second derivatives we encounter in the proof.
\newline~
\newline
Let \[s_{i,j}(a,b)=\sum_{n=1}^{b-1}{\overline{B}_i\left(\frac{n}{b}\right) \overline{B}_j\left(\frac{an}{b}\right)}\]
in which $\overline{B}_i(x)=B_i\left(x-\lfloor x \rfloor\right)$, where $B_i(x)$ is the $i$th Bernoulli polynomial. Choi, Jun, Lee, and Lim \cite{choi} call $s_{i,j}(a,b)$ a \textit{generalized Dedekind sum}.
Note that
\begin{equation*}
\begin{gathered}
\sum_{n=1}^{b-1}{\left\langle\frac{n}{a}\right\rangle_b H\left(\frac{n}{b}\right)}+\sum_{n=1}^{b-1}{\left\langle\frac{n}{a}\right\rangle_b \overline{B}_3\left(\frac{n}{b}\right)}=\\
\frac{1}{2}\sum_{n=1}^{b-1}{\left\langle\frac{n}{a}\right\rangle_b \frac{n}{b}\left(1-\frac{n}{b}\right)} =
\frac{1}{2}\sum_{n=1}^{\left\lfloor b/2 \right\rfloor}{\left(\left\langle\frac{n}{a}\right\rangle_b - \left\langle\frac{b-n}{a}\right\rangle_b\right)\frac{n}{b}\left(1-\frac{n}{b}\right)}=0
\end{gathered}
\end{equation*}
since the term
\[
\left\langle\frac{n}{a}\right\rangle_b - \left\langle\frac{b-n}{a}\right\rangle_b = 0 \quad \forall n \in \left[1,b-1\right],a \in (\mathbb{Z}/b\mathbb{Z})^\times
\]
Of course, $\left\langle\frac{n}{a}\right\rangle_b = \overline{B}_1\left(\frac{an}{b}\right)$, allowing us to make the substitution
\[\sum_{n=1}^{b-1}{\left\langle\frac{n}{a}\right\rangle_b H\left(\frac{n}{b}\right)} = -\sum_{n=0}^{b}{\overline{B}_3\left(\frac{n}{b}\right)\overline{B}_1\left(\frac{an}{b}\right)} = -s_{3,1}\left(a^{-1},b\right)=-s_{1,3}\left(a,b\right)
\]
in the equation in Theorem 1.2. 
\newline~
\newline
Choi, Jun, Lee, and Lim \cite{choi} provide identities governing generalized Dedekind sums that will assist our proof of Theorem 1.2. They define
\begin{equation}
h_{i,j}(a,b)=(-1)^{i+j}\frac{1}{i!j!}(s_{i,j}(a,b)-d_{i,j}B_{i}B_{j})
\tag{6.1}
\end{equation}
in which
 \begin{equation*}
d_{i,j} = \begin{cases}
        1 \text{ if $i=1$ or $j=1$,}
        \\
        0 \text{ otherwise}.
        \end{cases}
 \end{equation*}
and prove the reciprocity formula
\begin{equation}
\begin{gathered}
-q^{i-1}\sum^{j}_{u=0}{\binom{i-1+u}{i-1}h_{i-1+u,b-u}(p,q)(-p)^{u}}+\\
p^{j-1}\sum^{i}_{v=0}{\binom{j-1+v}{j-1}h_{j-1+v,i-v}(q,p)(-q)^{v}}=\\
\frac{B_{i-1}B_{j}}{(i-1)!j!}q(-1)^{j-1}+\frac{B_{i}B_{j-1}}{i!(j-1)!}p(-1)^{j-1}
\tag{6.2}
\end{gathered}
\end{equation}
For even $i$, they show that
\begin{equation}
h_{i,0}(a,b)=h_{0,i}(a,b)=\frac{B_i}{i!}b^{1-i}
\tag{6.3}
\end{equation}
Additionally, they show that
\begin{equation}
h_{i,j}(-p,q)=\begin{cases}
(-1)^{j}h_{i,j}(p,q)-2B_1^2 & \text{if }i=j=1 \\
(-1)^{j}h_{i,j}(p,q) & \text{otherwise}
\end{cases}
\tag{6.4}
\end{equation}
and that
\begin{equation}
h_{i,j}(p+q,q)=h_{i,j}(p,q)
\tag{6.5}
\end{equation}
Finally, let $t_{i,j}(a,b)=p^{i}s_{i,j}(a,b)$. We will need that
\begin{equation}
\begin{gathered}
t_{i,j}(2p,q)=2^{i}t_{i,j}(p,q)\text{ for }2\nmid q \text{ and } i + j = 4 \\
t_{i,j}(p,q/2)=2^{i}t_{i,j}(p,q)\text{ for }2\mid q \text{ and } i + j = 4
\end{gathered}
\tag{6.6}
\end{equation}
which can be verified term by term for each possible choice of $i$ and $j$.
\newpage
\noindent We are now ready to prove the identity via induction by following the structure given in Section 4. One can easily verify that the identity holds for the slab of order $5$ starting at $-1$  on the $q$-deformed Stern-Brocot tree, which we take as a base case. We now assume that Theorem 1.2 holds for every $q$-rational in the slab of order $5$ starting at $i$ on the $q$-deformed Stern-Brocot tree, and denote a given $q$-rational on its last layer $a/b$. We wish to prove that this identity holds for every $q$-rational in the slab of order $5$ starting at $i+1$, which boils down to proving that it holds for any given child of $a/b$. Following the proof structure further, we take $c/d$ to be at depth $i+1$, and break up the alignment of $a/b$, $c/d$, and our child of $a/b$ into eight cases.
\newline~
\newline
\underline{Case 1}
\newline
In Cases 1 and 2, $bc-ad=1$, $a^{-1}=b-d$, and $c^{-1}=b-(n+1)d$.
\begin{center}
\begin{forest}
for tree={alias/.wrap pgfmath arg={a-#1}{id},font=\Large}
[$\frac{a-(q^{n}+q^{n-1})c}{b-(q^{n}+q^{n-1})d}$
[{\large \dots}
[{\large \dots} 
[{\large \dots}] 
[{\large \dots}] ]
[{\large \dots}
[{\large \dots}]
[{\large \dots}] ] ]
[$\frac{a-q^{n}c}{b-q^{n}d}$
[{\large \dots}
[{\large \dots}] 
[{\large \dots}] ] 
[$\frac{a}{b}$ 
[{\large \dots}] 
[$\frac{a+q^{n+1}c}{b+q^{n+1}d}$] ] ] ]
\path (current bounding box.north west) node[below right,font=\Large] (tl) {{\large \dots}}
(tl) foreach \x in {2,3,4,5} {edge (a-\x)}
(current bounding box.north east) node[below right,font=\Large] (tr) {$\frac{c}{d}$}
(tr) foreach \x in {2,10,14,16} {edge (a-\x)}
(a-2) foreach \x in {7,9,11,12} {edge (a-\x)}
(a-3) foreach \x in {6,8} {edge (a-\x)}
(a-10) foreach \x in {13,15} {edge (a-\x)};
\draw (tr) edge [very thick] (a-2)
(tr) edge [very thick] (a-10)
(tr) edge [very thick] (a-14)
(tr) edge [very thick] (a-16)
(a-2) edge [very thick] (a-10)
(a-10) edge [very thick] (a-14)
(a-14) edge [very thick] (a-16);
\end{forest}
\end{center}
We aim to prove that the identity holds for $(a+q^{n+1}c)/(b+q^{n+1}d)$. We start by taking
\begin{equation*}
\begin{gathered}
\left.\frac{d^2}{dq^2}\frac{a+q^{n+1}c}{b+q^{n+1}d}\right|_{q=1}-3\left(\frac{b}{b+d}\right)^3\left.\frac{d^2}{dq^2}\frac{a}{b}\right|_{q=1}-6\left(\frac{d}{b+d}\right)^3\left.\frac{d^2}{dq^2}\frac{c}{d}\right|_{q=1}+\\
3\left(\frac{b-d}{b+d}\right)^3\left.\frac{d^2}{dq^2}\frac{a-q^nc}{b-q^nd}\right|_{q=1}-\left.\frac{d^2}{dq^2}\left(\frac{b-2d}{b+d}\right)^3\frac{a-(q^n+q^{n-1})c}{b-(q^n+q^{n-1})d}\right|_{q=1}=\\
\frac{2 \left(d \left(-3 d a'+a \left(3 d'+d (3 n+2)\right)+3 c
   b'\right)\right)}{(b+d)^3}+\\
   \frac{2\left(-b \left(a d-3 d c'+c \left(6 d'+d (3
   n+2)\right)\right)+b^2 c\right)}{(b+d)^3}=
\frac{6c-4b-4d}{(b+d)^3}
\end{gathered}
\end{equation*}
which we compute using the identity for the first derivative of $\left[\frac{a}{b}\right]_q$ and the lemma. Since all vanishing lineages of order 5 were covered in the base case, we know that the lineage of $(a+q^{n+1}c)/(b+q^{n+1}d)$ of order 5 is non-vanishing and may apply Theorem 4.2 to obtain the coefficients $C_1 = 3$, $C_2 = 6$, $C_3 = -3$, and $C_4 = 1$. Plugging in the identity for $a/b$, $c/d$, $(a-q^{n}c)/(b-q^{n}d)$,\break and $(a-(q^{n}+q^{n=1})c)/(b-(q^{n}+q^{n-1})d)$, we have
\begin{equation*}
\begin{gathered}
3\left(\frac{b}{b+d}\right)^3\left.\frac{d^2}{dq^2}\frac{a}{b}\right|_{q=1}+6\left(\frac{d}{b+d}\right)^3\left.\frac{d^2}{dq^2}\frac{c}{d}\right|_{q=1}-3\left(\frac{b-d}{b+d}\right)^3\left.\frac{d^2}{dq^2}\frac{a-q^nc}{b+q^nd}\right|_{q=1}\\
+\left.\frac{d^2}{dq^2}\left(\frac{b-2d}{b+d}\right)^3\frac{a-(q^n+q^{n-1})c}{b+(q^n+q^{n-1})d}\right|_{q=1}+\frac{6c-4b-4d}{(b+d)^3} = \\
\frac{1}{3}\left(\frac{a+c}{b+d}\right)^3-\left(\frac{a+c}{b+d}\right)^2+\frac{5}{3}\left(\frac{a+c}{b+d}\right)-1+\left(\frac{1}{b+d}\right)^2-
\frac{a+c}{(b+d)^3}+\\
\frac{-4b+2d}{(b+d)^3}-60\left(\frac{b}{b+d}\right)^3s_{1,3}\left(a,b\right)-120\left(\frac{d}{b+d}\right)^3s_{1,3}\left(c,d\right)+\\
60\left(\frac{b-d}{b+d}\right)^3s_{1,3}\left(a-c,b-d\right)-20\left(\frac{b-2d}{b+d}\right)^3s_{1,3}\left(a-2c,b-2d\right)
\end{gathered}
\end{equation*}
Thus, to show that the identity holds for $(a+q^{n}c)/(b+d^{n}d)$, we must show that
\begin{equation*}
\begin{gathered}
20s_{1,3}\left(a+c,b+d\right)-60\left(\frac{b}{b+d}\right)^3s_{1,3}\left(a,b\right)-\\
120\left(\frac{d}{b+d}\right)^3s_{1,3}\left(c,d\right)+60\left(\frac{b-d}{b+d}\right)^3s_{1,3}\left(a-c,b-d\right)-\\
20\left(\frac{b-2d}{b+d}\right)^3s_{1,3}\left(a-2c,b-2d\right)+\frac{-4b+2d}{(b+d)^3}=0
\end{gathered}
\end{equation*}
I denote the equation above as
\begin{equation*}
v_1 \cdot \omega_1 = 0
\end{equation*}
in which
\begin{equation*}
v_1 = \begin{pmatrix}
20(b+d)^3 \\
-60b^3 \\
0 \\
0 \\
-120d^3 \\
60(b-d)^3 \\
-20(b-2d)^3 \\
-4b+2d
\end{pmatrix}
\end{equation*}
and
\begin{equation*}
\omega_1 = \begin{pmatrix}s_{1,3}(a+c,b+d) \\
s_{1,3}(a,b) \\
s_{2,2}(a,b) \\
s_{3,1}(a,b) \\
s_{1,3}(c,d) \\
s_{1,3}(a-c,b-d) \\
s_{1,3}(a-2c,b-2d) \\
1 \end{pmatrix}
\end{equation*}
a notation I will use throughout the rest of the proof for convenience.
Note that $a^{-1}=b-d$, $c^{-1}=b-(n+1)d$, $(a+c)^{-1}=b$, $(a-c)^{-1}=b-2d$, and $(a-2c)^{-1}=b-3d$; taking Equation 6.2 for $i = 4$, $j = 1$, $p = b$, and $q = b + d$, and applying Equations 6.1, 6.3, 6.4, and 6.5, we have
\begin{equation*}
\begin{gathered}
-\frac{1}{6}(b+d)^3s_{3,1}(b,b+d)+\frac{1}{6}b(b+d)^3s_{4,0}(b,b+d)+\frac{1}{24}s_{0,4}(b-d,b)+\\
\frac{1}{6}(b+d)s_{1,3}(b-d,b)+\frac{1}{4}(b+d)^2s_{2,2}(b-d,b)+\\
\frac{1}{6}(b+d)^3s_{3,1}(b-d,b)+\frac{1}{24}(b+d)^4s_{4,0}(b-d,b)=-\frac{1}{720}b\Rightarrow\\
-\frac{1}{6}(b+d)^3s_{3,1}(b,b+d)-\frac{1}{240}b-\frac{1}{720}b^{-3}+\\
\frac{1}{6}(b+d)s_{1,3}(b-d,b)+\frac{1}{4}(b+d)^2s_{2,2}(b-d,b)+\\
\frac{1}{6}(b+d)^3s_{3,1}(b-d,b)-\frac{1}{720}(b+d)^4b^{-3} = 0\Rightarrow
\boxed{u_1 \cdot \omega_1 = 0}
\end{gathered}
\end{equation*}
in which
\begin{equation*}
u_1=\begin{pmatrix}
-(b+d)^{3}/6 \\
(b+d)^3/6 \\
(b+d)^2/4 \\
(b+d)/6 \\
0 \\
0 \\
0 \\
-b/240-b^{-3}/720-b^{-3}(b+d)^4/720
\end{pmatrix}
\end{equation*}
Taking Equation 6.2 for $i = 4$, $j = 1$, $p = b$, and $q = b - d$, and applying Equations 6.1, 6.3, 6.4, and 6.5, we have
\begin{equation*}
\begin{gathered}
\frac{1}{6}(b-d)^3s_{3,1}(b-2d,b-d)+\frac{1}{6}b(b-d)^3s_{4,0}(b-2d,b-d)+\\
\frac{1}{24}s_{0,4}(b-d,b)-\frac{1}{6}(b-d)s_{1,3}(b-d,b)+\frac{1}{4}(b-d)^2s_{2,2}(b-d,b)-\\
\frac{1}{6}(b-d)^3s_{3,1}(b-d,b)+\frac{1}{24}(b-d)^4s_{4,0}(b-d,b)=-\frac{1}{720}b\Rightarrow
\end{gathered}
\end{equation*}
\begin{equation*}
\begin{gathered}
\frac{1}{6}(b-d)^3s_{3,1}(b-2d,b-d)-\frac{1}{240}b-\frac{1}{720}b^{-3}-\\
\frac{1}{6}(b-d)s_{1,3}(b-d,b)+\frac{1}{4}(b-d)^2s_{2,2}(b-d,b)-\\
\frac{1}{6}(b-d)^3s_{3,1}(b-d,b)-\frac{1}{720}(b-d)^4b^{-3}=0\Rightarrow
\boxed{u_2 \cdot \omega_1 = 0}
\end{gathered}
\end{equation*}
in which
\begin{equation*}
u_2=\begin{pmatrix}
0 \\
-(b-d)^3/6 \\
(b-d)^2/4 \\
-(b-d)/6 \\
0 \\
-(b-d)^{3}/6 \\
0 \\
-b/240-b^{-3}/720-b^{-3}(b-d)^4/720
\end{pmatrix}
\end{equation*}
Taking Equation 6.2 for $i = 4$, $j = 1$, $p = b$, and $q = d$, and applying Equations 6.1, 6.3, 6.4, and 6.5, we have
\begin{equation*}
\begin{gathered}
-\frac{1}{6}d^3s_{3,1}(b-(n-1)d,d)+\frac{1}{6}bd^3s_{4,0}(b-(n-1)d,d)+\\
\frac{1}{24}s_{0,4}(b-d,b)+\frac{1}{6}ds_{1,3}(b-d,b)+\frac{1}{4}d^2s_{2,2}(b-d,b)+\\
\frac{1}{6}d^3s_{3,1}(b-d,b)+\frac{1}{24}d^4s_{4,0}(b-d,b)=-\frac{1}{720}b\Rightarrow\\
-\frac{1}{6}d^3s_{3,1}(b-(n-1)d,d)-\frac{1}{240}b-\frac{1}{720}b^{-3}+\\
\frac{1}{6}ds_{1,3}(b-d,b)+\frac{1}{4}d^2s_{2,2}(b-d,b)+\\
\frac{1}{6}d^3s_{3,1}(b-d,b)-\frac{1}{720}d^4b^{-3}=0\Rightarrow
\boxed{u_3 \cdot \omega_1 = 0}
\end{gathered}
\end{equation*}
in which
\begin{equation*}
u_3=\begin{pmatrix}
0 \\
d^3/6 \\
d^2/4 \\
d/6 \\
-d^{3}/6 \\
0 \\
0 \\
-b/240-b^{-3}/720-b^{-3}d^4/720
\end{pmatrix}
\end{equation*}
\newpage
\noindent Finally, taking Equation 6.2 for $i = 4$, $j = 1$, $p = b$, and $q = b - 2d$, and applying Equations 6.1, 6.3, 6.4, 6.5, and 6.6, we have
\begin{equation*}
\begin{gathered}
\frac{1}{3}(b-2d)^3s_{3,1}(b-3d,b-2d)+\frac{1}{6}b(b-2d)^3s_{4,0}(b-3d,b-2d)+\\\frac{2}{3}s_{0,4}(b-d,b)-\frac{4}{3}(b-2d)s_{1,3}(b-d,b)+(b-2d)^2s_{2,2}(b-d,b)-\\\frac{1}{3}(b-2d)^3s_{3,1}(b-d,b)+\frac{1}{24}(b-d)^4s_{4,0}(b-2d,b)=-\frac{1}{720}b\Rightarrow
\end{gathered}
\end{equation*}
\begin{equation*}
\begin{gathered}
\frac{1}{3}(b-2d)^3s_{3,1}(b-3d,b-2d)-\frac{1}{240}b-\frac{1}{45}b^{-3}-\\
\frac{4}{3}(b-2d)s_{1,3}(b-d,b)+(b-2d)^2s_{2,2}(b-d,b)-\\
\frac{1}{3}(b-2d)^3s_{3,1}(b-d,b)-\frac{1}{720}(b-2d)^4b^{-3}=0\Rightarrow
\boxed{u_4 \cdot \omega_1 = 0}
\end{gathered}
\end{equation*}
in which
\begin{equation*}
u_4=\begin{pmatrix}
0 \\
-(b-2d)^3/3 \\
(b-2d)^2 \\
-4(b-2d)/3 \\
0 \\
0 \\
(b-2d)^{3}/3 \\
-b/240-b^{-3}/45-b^{-3}(b-2d)^4/720
\end{pmatrix}
\end{equation*}
Since $v_1=-120u_1+360u_2+720u_3-60u_4$, we see that \[v_1 \cdot \omega_1 = (-120u_1+360u_2+720u_3-60u_4) \cdot \omega_1 = 0\]
and our proof is complete for Case 1.
\newline~
\newline
\underline{Case 2}
\newline
Recall that $bc-ad=1$, $a^{-1}=b-d$, and $c^{-1}=b-(n+1)d$.
\begin{center}
\begin{forest}
for tree={alias/.wrap pgfmath arg={a-#1}{id},font=\Large}
[$\frac{a-(q^{n}+q^{n-1})c}{b-(q^{n}+q^{n-1})d}$
[{\large \dots}
[{\large \dots}
[{\large \dots}] 
[{\large \dots}] ]
[{\large \dots}
[{\large \dots}]
[{\large \dots}] ] ]
[$\frac{a-q^{n}c}{b-q^{n}d}$
[{\large \dots}
[{\large \dots}] 
[{\large \dots}] ] 
[$\frac{a}{b}$ 
[$\frac{(q+1)a-q^{n}c}{(q+1)b-q^{n}d}$] 
[{\large \dots}] ] ] ]
\path (current bounding box.north west) node[below right,font=\Large] (tl) {{\large \dots}}
(tl) foreach \x in {2,3,4,5} {edge (a-\x)}
(current bounding box.north east) node[below right,font=\Large] (tr) {$\frac{c}{d}$}
(tr) foreach \x in {2,10,14,16} {edge (a-\x)}
(a-2) foreach \x in {7,9,11,12} {edge (a-\x)}
(a-3) foreach \x in {6,8} {edge (a-\x)}
(a-10) foreach \x in {13,15} {edge (a-\x)};
\draw (tr) edge [very thick] (a-2)
(tr) edge [very thick] (a-10)
(tr) edge [very thick] (a-14)
(a-2) edge [very thick] (a-10)
(a-10) edge [very thick] (a-14)
(a-10) edge [very thick] (a-15)
(a-14) edge [very thick] (a-15);
\end{forest}
\end{center}
We aim to prove that the identity holds for $((q+1)a-q^{n}c)/((q+1)b-q^{n}d)$. Analogous to Case 1, we start by taking
\begin{equation*}
\begin{gathered}
\left.\frac{d^2}{dq^2}\frac{(q+1)a-q^n c}{(q+1)b-q^n d}\right|_{q=1}-3\left(\frac{b}{2b-d}\right)^3\left.\frac{d^2}{dq^2}\frac{a}{b}\right|_{q=1}+3\left(\frac{d}{2b-d}\right)^3\left.\frac{d^2}{dq^2}\frac{c}{d}\right|_{q=1}-\\
6\left(\frac{b-d}{2b-d}\right)^3\left.\frac{d^2}{dq^2}\frac{a-q^nc}{b-q^nd}\right|_{q=1}+\left.\frac{d^2}{dq^2}\left(\frac{b-2d}{2b-d}\right)^3\frac{a-(q^n+q^{n-1})c}{b-(q^n+q^{n-1})d}\right|_{q=1}=
\end{gathered}
\end{equation*}
\begin{equation*}
\begin{gathered}
-2\left(\frac{b \left(3 d \left(a'+c'\right)+a \left(3 d'+d (3n-2)\right)-c \left(-3 b'+6 d'+3 d n+d\right)\right)}{(2 b-d)^3}\right.+\\
   \left.\frac{d \left(-3
   d a'+a \left(-6 b'+3 d'+3 d n+d\right)+3 c
   b'\right)+b^2 \left(c (2-3 n)-3 c'\right)}{(2 b-d)^3}\right)=\\
   \frac{6a-6c-4b+8d}{(2b-d)^3}
\end{gathered}
\end{equation*}
Following the same steps as before, we must show that
\begin{equation*}
\begin{gathered}
20s_{1,3}\left(2a-c,2b-d\right)-60\left(\frac{b}{b+d}\right)^3s_{1,3}\left(a,b\right)+\\
60\left(\frac{d}{2b-d}\right)^3s_{1,3}\left(c,d\right)-120\left(\frac{b-d}{2b-d}\right)^3s_{1,3}\left(a-c,b-d\right)+\\
20\left(\frac{b-2d}{2b-d}\right)^3s_{1,3}\left(a-2c,b-2d\right)+\frac{2b+2d}{(2b-d)^3}=0
\end{gathered}
\end{equation*}
or, in other words, that
\begin{equation*}
v_2 \cdot \omega_2 = 0
\end{equation*}
in which
\begin{equation*}
v_2 = \begin{pmatrix}
20(2b-d)^3 \\
-60b^3 \\
0 \\
0 \\
60d^3 \\
-120(b-d)^3 \\
20(b-2d)^3 \\
2b+2d
\end{pmatrix}
\end{equation*}
and
\begin{equation*}
\omega_2 = \begin{pmatrix}s_{1,3}(2a-c,2b-d) \\
s_{1,3}(a,b) \\
s_{2,2}(a,b) \\
s_{3,1}(a,b) \\
s_{1,3}(c,d) \\
s_{1,3}(a-c,b-d) \\
s_{1,3}(a-2c,b-2d) \\
1 \end{pmatrix}
\end{equation*}
Note that $a^{-1}=b-d$, $c^{-1}=b-(n+1)d$, $(2a-c)^{-1}=b-d$, $(a-c)^{-1}=b-2d$, and $(a-2c)^{-1}=b-3d$; taking Equation 6.2 for $i = 4$, $j = 1$, $p = b$, and $q = 2b - d$, and applying Equations 6.1, 6.3, 6.4, and 6.5, we have
\begin{equation*}
\begin{gathered}
-\frac{1}{6}(2b-d)^3s_{3,1}(b,2b-d)+\frac{1}{6}b(2b-d)^3s_{4,0}(b,2b-d)+\frac{1}{24}s_{0,4}(b-d,b)-\\
\frac{1}{6}(2b-d)s_{1,3}(b-d,b)+\frac{1}{4}(2b-d)^2s_{2,2}(b-d,b)-\\
\frac{1}{6}(2b-d)^3s_{3,1}(b-d,b)+\frac{1}{24}(2b-d)^4s_{4,0}(b-d,b)=-\frac{1}{720}b\Rightarrow\\
\frac{1}{6}(2b-d)^3s_{3,1}(b-d,2b-d)-\frac{1}{240}b-\frac{1}{720}b^{-3}-\\
\frac{1}{6}(2b-d)s_{1,3}(b-d,b)+\frac{1}{4}(2b-d)^2s_{2,2}(b-d,b)-\\
\frac{1}{6}(2b-d)^3s_{3,1}(b-d,b)-\frac{1}{720}(2b-d)^4b^{-3}=0\Rightarrow\boxed{u_5 \cdot \omega_2=0}
\end{gathered}
\end{equation*}
in which
\begin{equation*}
u_5=\begin{pmatrix}
(2b-d)^3/6 \\
-(2b-d)^3/6 \\
(2b-d)^2/4 \\
-(2b-d)/6 \\
0 \\
0 \\
0 \\
-b/240-b^{-3}/720-b^{-3}(2b-d)^4/720
\end{pmatrix}
\end{equation*}
Since $\omega_1$ and $\omega_2$ only differ in the first row, and the entries in the first rows of $u_2$, $u_3$, and $u_4$ are all $0$, we see that $u_2 \cdot \omega_2 = u_2 \cdot \omega_1 = 0$, $u_3 \cdot \omega_2 = u_3 \cdot \omega_1 = 0$, and $u_4 \cdot \omega_2 = u_4 \cdot \omega_1 = 0$. Furthermore, since $v_2 = -720u_2-360u_3+60u_4+120u_5$, we see that
\[v_2 \cdot \omega_2 = (-720u_2-360u_3+60u_4+120u_5) \cdot \omega_2 = 0\]
and our proof is complete for Case 2. We prove the remaining cases similarly to the first two.
\newpage
\noindent \underline{Case 3}
\newline
In Cases 3 and 4, $bc-ad=2$, $a^{-1}=(b-d)/2$, and $c^{-1}=(b-(2n+3)d)/2$.
\begin{center}
\begin{forest}
for tree={alias/.wrap pgfmath arg={a-#1}{id},font=\Large}
[$\frac{a-q^{n+1}c}{b-q^{n+1}d}$
[{\large \dots}
[{\large \dots} 
[{\large \dots}] 
[{\large \dots}] ]
[{\large \dots}
[{\large \dots}]
[{\large \dots}] ] ]
[$\frac{a+q^{n}c}{b+q^{n}d}$
[$\frac{a}{b}$
[{\large \dots}]
[$\frac{(q^2+q+1)a+q^{n+2}c}{(q^2+q+1)b+q^{n+2}d}$] ] 
[{\large \dots}
[{\large \dots}] 
[{\large \dots}] ] ] ]
\path (current bounding box.north west) node[below right,font=\Large] (tl) {{\large \dots}}
(tl) foreach \x in {2,3,4,5} {edge (a-\x)}
(current bounding box.north east) node[below right,font=\Large] (tr) {$\frac{c}{d}$}
(tr) foreach \x in {2,10,14,16} {edge (a-\x)}
(a-2) foreach \x in {7,9,11,12} {edge (a-\x)}
(a-3) foreach \x in {6,8} {edge (a-\x)}
(a-10) foreach \x in {13,15} {edge (a-\x)};
\draw (tr) edge [very thick] (a-2)
(tr) edge [very thick] (a-10)
(a-2) edge [very thick] (a-10)
(a-2) edge [very thick] (a-11)
(a-10) edge [very thick] (a-11)
(a-11) edge [very thick] (a-13)
(a-10) edge [very thick] (a-13);
\end{forest}
\end{center}
(In the tree above, a factor of $q+1$ is canceled out from their respective numerators and denominators to yield $a/b$ and $c/d$.)
\begin{equation*}
\begin{gathered}
\left.\frac{d^2}{dq^2}\frac{(q^2+q+1)a+q^{n+2}c}{(q^2+q+1)b+q^{n+2}d}\right|_{q=1}-3\left(\frac{2b}{3b+d}\right)^3\left.\frac{d^2}{dq^2}\frac{a}{b}\right|_{q=1}+\\
\left(\frac{2d}{3b+d}\right)^3\left.\frac{d^2}{dq^2}\frac{c}{d}\right|_{q=1}-6\left(\frac{b+d}{3b+d}\right)^3\left.\frac{d^2}{dq^2}\frac{a+q^nc}{b+q^nd}\right|_{q=1}+\\
3\left.\frac{d^2}{dq^2}\left(\frac{b-d}{3b+d}\right)^3\frac{a-q^{n+1}c}{b-q^{n+1}d}\right|_{q=1}=\\
\frac{2 \left(-b \left(6 d \left(a'-c'\right)+a \left(6 d'+d (6
   n-3)\right)+c \left(6 \left(b'+2 d'\right)+d (6
   n+7)\right)\right)\right)}{(3 b+d)^3}+\\
   \frac{2\left(d \left(-6 d a'+a \left(6 \left(2
   b'+d'\right)+d (6 n+7)\right)+6 c b'\right)+b^2 \left(6
   c'+c (6 n-3)\right)\right)}{(3 b+d)^3}=\\
   \frac{3a+3c-6b-2d}{((3 b+d)/2)^3}
\end{gathered}
\end{equation*}
This time, we must have
\begin{equation*}
\begin{gathered}
20s_{1,3}\left(\frac{3a+c}{2},\frac{3b+d}{2}\right)-60\left(\frac{2b}{3b+d}\right)^3s_{1,3}\left(a,b\right)+\\
20\left(\frac{2d}{3b+d}\right)^3s_{1,3}\left(c,d\right)-120\left(\frac{b+d}{3b+d}\right)^3s_{1,3}\left(\frac{a+c}{2},\frac{b+d}{2}\right)+\\60\left(\frac{b-d}{3b+d}\right)^3s_{1,3}\left(\frac{a-c}{2},\frac{b-d}{2}\right)+\frac{-3b+d}{((3b+d)/2)^3}=0
\end{gathered}
\end{equation*}
Define $v_3$ analogously to $v_1$ and $v_2$, and define $\omega_3$ analogously to $\omega_1$ and $\omega_2$:
\newline~
\newline
\begin{tabularx}{\textwidth}{XX}
$v_3 = \begin{pmatrix}
20((3b+d)/2)^3 \\
-60b^3 \\
0 \\
0 \\
20d^3 \\
-120((b+d)/2)^3 \\
60((b-d)/2)^3 \\
-3b+d
\end{pmatrix}$ & $\omega_3 = \begin{pmatrix}
s_{1,3}((3a+c)/2,(3b+d)/2) \\
s_{1,3}(a,b) \\
s_{2,2}(a,b) \\
s_{3,1}(a,b) \\
s_{1,3}{c,d} \\
s_{1,3}((a+c)/2,(b+d)/2) \\
s_{1,3}((a-c)/2,(b-d)/2) \\
1
\end{pmatrix}$
\end{tabularx}
\newline~
\newline
Observe that $a^{-1}=(b-d)/2$, $c^{-1}=(b-(2n+3)d)/2$, $((3b+d)/2)^{-1}=b$, $((a+c)/2)^{-1}=(b-d)/2$, and $((a-c)/2)^{-1}=(b-3d)/2$. Using these identities, derive $u_6$ from Equation 6.2 for $i = 4$, $j = 1$, $p = b$, and $q = (3b+d)/2$ analogously to how we derived $u_1$ through $u_5$ from Equation 6.2 in Cases 1 and 2. Furthermore, derive $u_7$ likewise for $q = (b+d)/2$, $u_8$ likewise for $q = d$, and $u_9$ likewise for $q = (b-d)/2$. This process yields the following vectors:
\begin{equation*}
\begin{gathered}
 u_6 = \begin{pmatrix}
-((3b+d)/2)^3/6 \\
((3b+d)/2)^3/6 \\
((3b+d)/2)^2/4 \\
((3b+d)/2)/6 \\
0 \\
0 \\
0 \\
-b/240-b^{-3}/720-b^{-3}((3b+d)/2)^4/720
\end{pmatrix} \\
u_7 = \begin{pmatrix}
0 \\
((b+d)/2)^3/6 \\
((b+d)/2)^2/4 \\
((b+d)/2)/6 \\
0 \\
-((b+d)/2)^3/6 \\
0 \\
-b/240-b^{-3}/720-b^{-3}((b+d)/2)^4/720
\end{pmatrix} \\
u_8 = \begin{pmatrix}
0 \\
-d^3/3 \\
d^2 \\
-4d/3 \\
-d^3/3 \\
0 \\
0 \\
-b/240-b^{-3}/45-b^{-3}d^4/720
\end{pmatrix}
\end{gathered}
\end{equation*}
\begin{equation*}
u_9 = \begin{pmatrix}
0 \\
-((b-d)/2)^3/6 \\
((b-d)/2)^2/4 \\
-((b-d)/2)/6 \\
0 \\
0 \\
((b-d)/2)^3/6 \\
-b/240-b^{-3}/720-b^{-3}((b-d)/2)^4/720
\end{pmatrix}
\end{equation*}
\newline~
\newline
We see that
$v_3=-120u_6+720u_7-60u_8+360u_9$, and thus
\[v_3 \cdot \omega_3 = (-120u_6+720u_7-60u_8+360u_9) \cdot \omega_3 = 0\]
as desired.
\newline~
\newline
\underline{Case 4}
\newline
Recall that $bc-ad=2$, $a^{-1}=(b-d)/2$, and $c^{-1}=(b-(2n+3)d)/2$.
\begin{center}
\begin{forest}
for tree={alias/.wrap pgfmath arg={a-#1}{id},font=\Large}
[$\frac{a-q^{n+1}c}{b-q^{n+1}d}$
[{\large \dots}
[{\large \dots} 
[{\large \dots}] 
[{\large \dots}] ]
[{\large \dots}
[{\large \dots}]
[{\large \dots}] ] ]
[$\frac{a+q^{n}c}{b+q^{n}d}$
[$\frac{a}{b}$
[$\frac{(q^2+q+1)a-q^{n+1}c}{(q^2+q+1)b-q^{n+1}d}$]
[{\large \dots}] ] 
[{\large \dots}
[{\large \dots}] 
[{\large \dots}] ] ] ]
\path (current bounding box.north west) node[below right,font=\Large] (tl) {{\large \dots}}
(tl) foreach \x in {2,3,4,5} {edge (a-\x)}
(current bounding box.north east) node[below right,font=\Large] (tr) {$\frac{c}{d}$}
(tr) foreach \x in {2,10,14,16} {edge (a-\x)}
(a-2) foreach \x in {7,9,11,12} {edge (a-\x)}
(a-3) foreach \x in {6,8} {edge (a-\x)}
(a-10) foreach \x in {13,15} {edge (a-\x)};
\draw (tr) edge [very thick] (a-2)
(tr) edge [very thick] (a-10)
(a-2) edge [very thick] (a-10)
(a-2) edge [very thick] (a-11)
(a-2) edge [very thick] (a-12)
(a-10) edge [very thick] (a-11)
(a-11) edge [very thick] (a-12);
\end{forest}
\end{center}
(In the tree above, a factor of $q+1$ is canceled out from their respective numerators and denominators to yield $a/b$ and $c/d$.)
\begin{equation*}
\begin{gathered}
\left.\frac{d^2}{dq^2}\frac{(q^2+q+1)a-q^{n+1}c}{(q^2+q+1)b-q^{n+1}d}\right|_{q=1}-3\left(\frac{2b}{3b-d}\right)^3\left.\frac{d^2}{dq^2}\frac{a}{b}\right|_{q=1}-\\
\left(\frac{2d}{3b-d}\right)^3\left.\frac{d^2}{dq^2}\frac{c}{d}\right|_{q=1}+3\left(\frac{b+d}{3b-d}\right)^3\left.\frac{d^2}{dq^2}\frac{a+q^nc}{b+q^nd}\right|_{q=1}-\\
6\left.\frac{d^2}{dq^2}\left(\frac{b-d}{3b-d}\right)^3\frac{a-q^{n+1}c}{b-q^{n+1}d}\right|_{q=1}=
\end{gathered}
\end{equation*}
\begin{equation*}
\begin{gathered}
-\frac{2 \left(b \left(6 d \left(a'+c'\right)+3 a \left(2 d'+2 d
   n+d\right)-c \left(-6 b'+12 d'+6 d n+5 d\right)\right)\right)}{(3
   b-d)^3}-\\
   \frac{2\left(d
   \left(-6 d a'+a \left(6 \left(d'-2 b'\right)+d (6 n+5)\right)+6
   c b'\right)-3 b^2 \left(2 c'+2 c n+c\right)\right)}{(3
   b-d)^3}=\\
   \frac{3a-3c+4d}{((3b-d)/2)^3}
\end{gathered}
\end{equation*}
We must have
\begin{equation*}
\begin{gathered}
20s_{1,3}\left(\frac{3a-c}{2},\frac{3b-d}{2}\right)-60\left(\frac{2b}{3b-d}\right)^3s_{1,3}\left(a,b\right)-\\
20\left(\frac{2d}{3b-d}\right)^3s_{1,3}\left(c,d\right)+60\left(\frac{b+d}{3b-d}\right)^3s_{1,3}\left(\frac{a+c}{2},\frac{b+d}{2}\right)-\\
120\left(\frac{b-d}{3b-d}\right)^3s_{1,3}\left(\frac{a-c}{2},\frac{b-d}{2}\right)+\frac{3b+d}{((3b-d)/2)^3}=0
\end{gathered}
\end{equation*}
Define $\omega_4$ analogously to $\omega_1$, $\omega_2$, and $\omega_3$, and define $v_4$ analogously to $v_1$, $v_2$, and $v_3$. Derive $u_{10}$ from Equation 6.2 for $i = 4$, $j = 1$, $p = b$, and $q = (3b-d)/2$; we see that $v_4=-360u_7+60u_8+720u_9+120u_{10}$, and thus
\[v_4 \cdot \omega_4 = (-360u_7+60u_8-720u_9+120u_{10}) \cdot \omega_4 = 0\]
as desired.
\newline~
\newline
\underline{Case 5}
\newline
In Cases 5 and 6, $ad - bc = 2$, $a^{-1}=(b+d)/2$, and $c^{-1}=(5d-b)/2$.
\begin{center}
\begin{forest}
for tree={alias/.wrap pgfmath arg={a-#1}{id},font=\Large}
[$\frac{a-c}{b-d}$
[$\frac{a+qc}{b+qd}$
[{\large \dots} 
[{\large \dots}] 
[{\large \dots}] ]
[$\frac{a}{b}$
[{\large \dots}]
[$\frac{(q^2+q+1)a-q^2c}{(q^2+q+1)b-q^2d}$] ] ]
[{\large \dots}
[{\large \dots}
[{\large \dots}]
[{\large \dots}] ] 
[{\large \dots}
[{\large \dots}] 
[{\large \dots}] ] ] ]
\path (current bounding box.north west) node[below right,font=\Large] (tl) {$\frac{c}{d}$}
(tl) foreach \x in {2,3,4,5} {edge (a-\x)}
(current bounding box.north east) node[below right,font=\Large] (tr) {{\large \dots}}
(tr) foreach \x in {2,10,14,16} {edge (a-\x)}
(a-2) foreach \x in {7,9,11,12} {edge (a-\x)}
(a-3) foreach \x in {6,8} {edge (a-\x)}
(a-10) foreach \x in {13,15} {edge (a-\x)};
\draw (tl) edge [very thick] (a-2)
(tl) edge [very thick] (a-3)
(a-2) edge [very thick] (a-3)
(a-2) edge [very thick] (a-7)
(a-2) edge [very thick] (a-9)
(a-3) edge [very thick] (a-7)
(a-7) edge [very thick] (a-9);
\end{forest}
\end{center}
\newpage
\noindent (In the tree above, a factor of $q^2+q$ is canceled out from their respective numerators and denominators to yield $a/b$ and $c/d$.)
\begin{equation*}
\begin{gathered}
\left.\frac{d^2}{dq^2}\frac{(q^2+q+1)a-q^2c}{(q^2+q+1)b-q^2d}\right|_{q=1}-3\left(\frac{2b}{3b-d}\right)^3\left.\frac{d^2}{dq^2}\frac{a}{b}\right|_{q=1}-\\
\left(\frac{2d}{3b-d}\right)^3\left.\frac{d^2}{dq^2}\frac{c}{d}\right|_{q=1}+3\left(\frac{b+d}{3b-d}\right)^3\left.\frac{d^2}{dq^2}\frac{a+qc}{b+qd}\right|_{q=1}-\\
6\left.\frac{d^2}{dq^2}\left(\frac{b-d}{3b-d}\right)^3\frac{a-c}{b-d}\right|_{q=1}=\\
\frac{2 \left(b \left(6 d \left(a'+c'\right)-3 a \left(d-2
   d'\right)+c \left(6 b'-12 d'-7 d\right)\right)\right)}{(3 b-d)^3}+\\
   \frac{2\left(d \left(-6 d
   a'+a \left(6 \left(d'-2 b'\right)+7 d\right)+6 c b'\right)+3
   b^2 \left(c-2 c'\right)\right)}{(3 b-d)^3}=\\
   \frac{3a-3c-6b+2d}{((3 b-d)/2)^3}
\end{gathered}
\end{equation*}
We must have
\begin{equation*}
\begin{gathered}
20s_{1,3}\left(\frac{3a-c}{2},\frac{3b-d}{2}\right)-60\left(\frac{2b}{3b-d}\right)^3s_{1,3}\left(a,b\right)-\\
20\left(\frac{2d}{3b-d}\right)^3s_{1,3}\left(c,d\right)+60\left(\frac{b+d}{3b-d}\right)^3s_{1,3}\left(\frac{a+c}{2},\frac{b+d}{2}\right)-\\
120\left(\frac{b-d}{3b-d}\right)^3s_{1,3}\left(\frac{b-d}{2},\frac{3b+d}{2}\right)+\frac{-3b-d}{((3b-d)/2)^3}=0
\end{gathered}
\end{equation*}
It's worth noting that this does \textit{not} contradict our result in Case 4, as $a$ and $c$ are different than before. Denote our new choices of $a$ and $c$ as $\hat{a}$ and $\hat{c}$,; by the reflexivity of the Stern-Brocot tree, we see that $\hat{a} = b - a$ and $\hat{c} = d - c$, in which $a$, $b$, $c$, and $d$ are the same as they were in Case 4. From Case 4, we have
\begin{equation*}
\begin{gathered}
20s_{1,3}\left(\frac{3a-c}{2},\frac{3b-d}{2}\right)-60\left(\frac{2b}{3b-d}\right)^3s_{1,3}\left(a,b\right)-\\
20\left(\frac{2d}{3b-d}\right)^3s_{1,3}\left(c,d\right)+60\left(\frac{b+d}{3b-d}\right)^3s_{1,3}\left(\frac{a+c}{2},\frac{b+d}{2}\right)-\\
120\left(\frac{b-d}{3b-d}\right)^3s_{1,3}\left(\frac{a-c}{2},\frac{b-d}{2}\right)+\frac{3b+d}{((3b-d)/2)^3}=0
\end{gathered}
\end{equation*}
By applying the identity $s_{1,3}(q-p,q)=-s_{1,3}(p,q)$ (by Equations 6.4 and 6.5), we can see that our result in Case 4 implies our desired result in Case 5.
\newline~
\newline
\underline{Case 6}
\newline
Recall that $ad - bc = 2$, $a^{-1}=(b+d)/2$, and $c^{-1}=(5d-b)/2$.
\begin{center}
\begin{forest}
for tree={alias/.wrap pgfmath arg={a-#1}{id},font=\Large}
[$\frac{a-c}{b-d}$
[$\frac{a+qc}{b+qd}$
[{\large \dots} 
[{\large \dots}] 
[{\large \dots}] ]
[$\frac{a}{b}$
[$\frac{(q^2+q+1)a+qc}{(q^2+q+1)b+qd}$]
[{\large \dots}] ] ]
[{\large \dots}
[{\large \dots}
[{\large \dots}]
[{\large \dots}] ] 
[{\large \dots}
[{\large \dots}] 
[{\large \dots}] ] ] ]
\path (current bounding box.north west) node[below right,font=\Large] (tl) {$\frac{c}{d}$}
(tl) foreach \x in {2,3,4,5} {edge (a-\x)}
(current bounding box.north east) node[below right,font=\Large] (tr) {{\large \dots}}
(tr) foreach \x in {2,10,14,16} {edge (a-\x)}
(a-2) foreach \x in {7,9,11,12} {edge (a-\x)}
(a-3) foreach \x in {6,8} {edge (a-\x)}
(a-10) foreach \x in {13,15} {edge (a-\x)};
\draw (tl) edge [very thick] (a-2)
(tl) edge [very thick] (a-3)
(a-2) edge [very thick] (a-3)
(a-2) edge [very thick] (a-7)
(a-3) edge [very thick] (a-7)
(a-3) edge [very thick] (a-8)
(a-7) edge [very thick] (a-8);
\end{forest}
\end{center}
(In the tree above, a factor of $q^2+q$ is canceled out from their respective numerators and denominators to yield $a/b$ and $c/d$.)
\begin{equation*}
\begin{gathered}
\left.\frac{d^2}{dq^2}\frac{(q^2+q+1)a+qc}{(q^2+q+1)b+qd}\right|_{q=1}-3\left(\frac{2b}{3b+d}\right)^3\left.\frac{d^2}{dq^2}\frac{a}{b}\right|_{q=1}+\\
\left(\frac{2d}{3b+d}\right)^3\left.\frac{d^2}{dq^2}\frac{c}{d}\right|_{q=1}-6\left(\frac{b+d}{3b+d}\right)^3\left.\frac{d^2}{dq^2}\frac{a+qc}{b+qd}\right|_{q=1}+\\
3\left.\frac{d^2}{dq^2}\left(\frac{b-d}{3b+d}\right)^3\frac{a-c}{b-d}\right|_{q=1}=\\
\frac{2 b \left(6 d \left(a'-c'\right)+3 a \left(2
   d'+d\right)+c \left(6 b'+12 d'+5 d\right)\right)}{(3 b+d)^3}-\\
   \frac{2 d \left(-6
   d a'+a \left(6 \left(2 b'+d'\right)+5 d\right)+6 c
   b'\right)+6 b^2 \left(2 c'+c\right)}{(3 b+d)^3}=\frac{3a+3c-4d}{(3b+d)^3}
\end{gathered}
\end{equation*}
We must have
\begin{equation*}
\begin{gathered}
20s_{1,3}\left(\frac{3a+c}{2},\frac{3b+d}{2}\right)-60\left(\frac{2b}{3b+d}\right)^3s_{1,3}\left(a,b\right)+\\
20\left(\frac{2d}{3b+d}\right)^3s_{1,3}\left(c,d\right)-120\left(\frac{b+d}{3b+d}\right)^3s_{1,3}\left(\frac{a+c}{2},\frac{b+d}{2}\right)+\\
60\left(\frac{b-d}{3b+d}\right)^3s_{1,3}\left(\frac{a-c}{2},\frac{b-d}{2}\right)+\frac{3b-d}{((3b+d)/2)^3}=0
\end{gathered}
\end{equation*}
which follows from Case 3 analogously to how Case 5 follows from Case 4.
\newline~
\newline
\underline{Case 7}
\newline
In Cases 7 and 8, $ad - bc = 1$, $a^{-1}=d$, and $c^{-1}=4d-b$.
\begin{center}
\begin{forest}
for tree={alias/.wrap pgfmath arg={a-#1}{id},font=\Large}
[$\frac{a-(q+1)c}{b-(q+1)d}$
[$\frac{a-c}{b-d}$
[$\frac{a}{b}$ 
[{\large \dots}] 
[$\frac{(q+1)a-qc}{(q+1)b-qd}$] ]
[{\large \dots}
[{\large \dots}]
[{\large \dots}] ] ]
[{\large \dots}
[{\large \dots}
[{\large \dots}]
[{\large \dots}] ] 
[{\large \dots}
[{\large \dots}] 
[{\large \dots}] ] ] ]
\path (current bounding box.north west) node[below right,font=\Large] (tl) {$\frac{c}{d}$}
(tl) foreach \x in {2,3,4,5} {edge (a-\x)}
(current bounding box.north east) node[below right,font=\Large] (tr) {{\large \dots}}
(tr) foreach \x in {2,10,14,16} {edge (a-\x)}
(a-2) foreach \x in {7,9,11,12} {edge (a-\x)}
(a-3) foreach \x in {6,8} {edge (a-\x)}
(a-10) foreach \x in {13,15} {edge (a-\x)};
\draw (tl) edge [very thick] (a-2)
(tl) edge [very thick] (a-3)
(tl) edge [very thick] (a-4)
(a-2) edge [very thick] (a-3)
(a-3) edge [very thick] (a-4)
(a-3) edge [very thick] (a-6)
(a-4) edge [very thick] (a-6);
\end{forest}
\end{center}
(In the tree above, a factor of $q^2$ is canceled out from their respective numerators and denominators to yield $a/b$ and $c/d$.)
\begin{equation*}
\begin{gathered}
\left.\frac{d^2}{dq^2}\frac{(q+1)a-qc}{(q+1)b-qd}\right|_{q=1}-3\left(\frac{b}{2b-d}\right)^3\left.\frac{d^2}{dq^2}\frac{a}{b}\right|_{q=1}+3\left(\frac{d}{2b-d}\right)^3\left.\frac{d^2}{dq^2}\frac{c}{d}\right|_{q=1}-\\
6\left(\frac{b-d}{2b-d}\right)^3\left.\frac{d^2}{dq^2}\frac{a-c}{b-d}\right|_{q=1}+\left.\frac{d^2}{dq^2}\left(\frac{b-2d}{2b-d}\right)^3\frac{a-(q+1)c}{b-(q+1)d}\right|_{q=1}=\\
\frac{2 \left(b \left(3 d \left(a'+c'\right)-a \left(d-3
   d'\right)+c \left(3 b'-6 d'-2 d\right)\right)\right)}{(2 b-d)^3}+\\
   \frac{2\left(d \left(-3 d
   a'+a \left(-6 b'+3 d'+2 d\right)+3 c b'\right)+b^2
   \left(c-3 c'\right)\right)}{(2 b-d)^3}=\\
\frac{6a-6c-8b+4d}{(2 b-d)^3}
\end{gathered}
\end{equation*}
We must have
\begin{equation*}
\begin{gathered}
20s_{1,3}\left(2a-c,2b-d\right)-60\left(\frac{b}{b+d}\right)^3s_{1,3}\left(a,b\right)+\\
60\left(\frac{d}{2b-d}\right)^3s_{1,3}\left(c,d\right)-120\left(\frac{b-d}{2b-d}\right)^3s_{1,3}\left(a-c,b-d\right)+\\
20\left(\frac{b-2d}{2b-d}\right)^3s_{1,3}\left(a-2c,b-2d\right)+\frac{-2b-2d}{(2b-d)^3}=0
\end{gathered}
\end{equation*}
which follows from Case 2 analogously to how Cases 5 and 6 follow from Cases 4 and 3, respectively.
\newline~
\newline
\underline{Case 8}
\newline
Recall that $ad - bc = 1$, $a^{-1}=d$, and $c^{-1}=4d-b$.
\begin{center}
\begin{forest}
for tree={alias/.wrap pgfmath arg={a-#1}{id},font=\Large}
[$\frac{a-(q+1)c}{b-(q+1)d}$
[$\frac{a-c}{b-d}$
[$\frac{a}{b}$ 
[$\frac{qa+c}{qb+d}$] 
[{\large \dots}] ]
[{\large \dots}
[{\large \dots}]
[{\large \dots}] ] ]
[{\large \dots}
[{\large \dots}
[{\large \dots}]
[{\large \dots}] ] 
[{\large \dots}
[{\large \dots}] 
[{\large \dots}] ] ] ]
\path (current bounding box.north west) node[below right,font=\Large] (tl) {$\frac{c}{d}$}
(tl) foreach \x in {2,3,4,5} {edge (a-\x)}
(current bounding box.north east) node[below right,font=\Large] (tr) {{\large \dots}}
(tr) foreach \x in {2,10,14,16} {edge (a-\x)}
(a-2) foreach \x in {7,9,11,12} {edge (a-\x)}
(a-3) foreach \x in {6,8} {edge (a-\x)}
(a-10) foreach \x in {13,15} {edge (a-\x)};
\draw (tl) edge [very thick] (a-2)
(tl) edge [very thick] (a-3)
(tl) edge [very thick] (a-4)
(tl) edge [very thick] (a-5)
(a-2) edge [very thick] (a-3)
(a-3) edge [very thick] (a-4)
(a-4) edge [very thick] (a-5);
\end{forest}
\end{center}
(In the tree above, a factor of $q^2$ is canceled out from their respective numerators and denominators to yield $a/b$ and $c/d$.)
\begin{equation*}
\begin{gathered}
\left.\frac{d^2}{dq^2}\frac{a+qc}{b+qd}\right|_{q=1}-3\left(\frac{b}{b+d}\right)^3\left.\frac{d^2}{dq^2}\frac{a}{b}\right|_{q=1}-6\left(\frac{d}{b+d}\right)^3\left.\frac{d^2}{dq^2}\frac{c}{d}\right|_{q=1}+\\
3\left(\frac{b-d}{b+d}\right)^3\left.\frac{d^2}{dq^2}\frac{a-c}{b-d}\right|_{q=1}-\left.\frac{d^2}{dq^2}\left(\frac{b-2d}{b+d}\right)^3\frac{a-(q+1)c}{b-(q+1)d}\right|_{q=1}=\\
-\frac{2 \left(b \left(d \left(2 a'+c'\right)+2 a d'+2 c
   \left(b'-d'\right)\right)+d \left(-d a'+a \left(d'-4
   b'\right)+c b'\right)-2 b^2 c'\right)}{(b+d)^3}\\
=\frac{6c+4b-8d}{(b+d)^3}
\end{gathered}
\end{equation*}
We must have
\begin{equation*}
\begin{gathered}
20s_{1,3}\left(a+c,b+d\right)-60\left(\frac{b}{b+d}\right)^3s_{1,3}\left(a,b\right)-\\
120\left(\frac{d}{b+d}\right)^3s_{1,3}\left(c,d\right)+60\left(\frac{b-d}{b+d}\right)^3s_{1,3}\left(a-c,b-d\right)-\\
20\left(\frac{b-2d}{b+d}\right)^3s_{1,3}\left(a-2c,b-2d\right)+\frac{4b-2d}{(b+d)^3}=0
\end{gathered}
\end{equation*}
which follows from Case 1 analogously to how Case 5, 6, and 7 follow from Case 4, 3, and 2, respectively. This completes the proof.
\section{\normalfont \textbf{Conclusion} \(\mid\) Possibilities for Future Research}
My two identities and their proofs shed some important insights into what future terms of the power series of $[a/b]_q$ may look like and how we can prove them. It is reasonable to speculate that the $i$th derivative of $[a/b]_q$ at $q = 1$ is of the form $\eta(a,b)/b^{i+1} + K \times s_{1,i+1}(a,b)$, in which  $\eta(a,b)$ is a polynomial of order $i + 1$ with integer coefficients and the formal parameters $a$ and $b$, $K \in \mathbb{Z}$, and $s_{1,i+1}(a,b)$ is a generalized Dedekind sum as defined by Choi, Jun, Lee, and Lim \cite{choi}. Since $b^{i+1}s_{1,i+1}(a,b) \in \mathbb{Z}$ \cite{apostol}, this form yields an integer when multiplied by $b^{i+1}$. The correct form must have this property, as one can see by expanding the $i$th derivative of $[a/b]_q$ at $q = 1$. Additionally, an identity for $i$th derivative of $[a/b]_q$ at $q = 1$ could be proven using the proof structure I gave in Section 4: split the inductive hypothesis into $2^{i+1}$ cases, and then use the coefficients given in Theorem 4.2 to break down each case. If the form given above is correct, each case will be trivial if $i$ is odd, and may be broken down with Equation 6.2 if $i$ is even. Of course, the number of cases in each proof grows exponentially with $i$, so more research is necessary to determine whether these cases can be proven concurrently. Hopefully, a more involved proof structure can generalize these cases and shed light on the form of the $i$th derivative of $[a/b]_q$ at $q = 1$ in more detail than my speculation above.
\section*{\normalfont \textbf{Acknowledgements}}
I would like to thank Prof. Richard Schwartz for introducing me to the concept of $q$-rational numbers, discussing my ideas with me throughout the research process, and helping me organize and write this paper. Additionally, I would like to thank Prof. Sophie Morier-Genoud and Prof. Valentin Ovsienko for talking with me about my results and opening up avenues for eventual publication and further research. I would like to thank my editor, David Hernandez, for his thorough comments on my paper and his recommendation of it for publication. I would also like to thank my parents for helping me edit this paper and offering useful feedback about how to communicate certain ideas to the reader. Finally, I would like to thank my friends Amelia, Max, Luke, Noah, and others for our conversations about my research and for the support of and investment in this paper that they have continued to show.
\section*{\normalfont \textbf{Appendix A} \(\mid\) Theorem from Section 2}
We aim to prove that $\alpha(q)/\beta(q) = [\alpha(1)/\beta(1)]_q$ for all relatively prime\break$\alpha(1) \in \mathbb{Z}, \beta(1) \in \mathbb{Z} \setminus 0$ via induction. Before diving into the proof, we define the following terminology.
\begin{definitionaone}
Define $\alpha/\beta$ and $\gamma/\delta$ such that $\lvert \alpha \delta - \beta \gamma\rvert = 1$. If both $\alpha/\beta$ and $\gamma/\delta$ are integers, we call their Farey sum a \textit{left branch}. Otherwise, let $\alpha/\beta$ be of the greater depth on the Stern-Brocot tree; we again call the Farey sum of $\alpha/\beta$ and $\gamma/\delta$ a left branch if $\alpha/\beta > \gamma/\delta$, or a \textit{right branch} if $\alpha/\beta < \gamma/\delta$. 
\end{definitionaone}
\newpage
\noindent We also observe that the continued fraction expansion of $u/v$ informs its placement along the Stern-Brocot tree. Let
\[
\frac{\alpha}{\beta} = u_0 + \cfrac{1}{u_1+\cfrac{1}{\cdots + \cfrac{1}{u_n+1}}}
\]
Starting the Farey tree at $u_0$ and $u_0 + 1$, take $u_1$ left branches, $u_2$ right branches, $u_3$ left branches, and so on down the tree through $u_n$; you will arrive at $u/v$.
\newline~
\newline
We are now ready to begin the proof. Given any $c/d$ on the Stern-Brocot tree, it has two parents, one of which has depth one less than it has on the tree. Refer to this parent as $a/b$, and define its continued fractions as
\[
\frac{a}{b}=a_0+\cfrac{1}{a_1+\cfrac{1}{\cdots+\cfrac{1}{a_m}}}\]
We must have that
\[
\frac{c}{d}=a_0+\cfrac{1}{a_1+\cfrac{1}{\cdots+\cfrac{1}{a_{m-1}}}}\]
and
\[
\frac{a}{b} \oplus \frac{c}{d} = a_0+\cfrac{1}{a_1+\cfrac{1}{\cdots+\cfrac{1}{a_m+1}}}\]
\cite{Shrader_Frechette}.
Our base cases (when $\alpha(1)/\beta(1) \in \mathbb{Z}$) are true by definition. Suppose that
\[
\frac{a(q)}{b(q)}=\left[\frac{a(1)}{b(1)}\right]_q=[a_0]_q+\cfrac{q^{a_0}}{[a_1]_{1/q}+\cfrac{q^{-a_1}}{\cdots+\cfrac{q^{a_{m-1}}}{\left[a_m\right]_{1/q}}}}
\]
and
\[
\frac{c(q)}{d(q)}=\left[\frac{c(1)}{d(1)}\right]_q=[a_0]_q+\cfrac{q^{a_0}}{[a_1]_{1/q}+\cfrac{q^{-a_1}}{\cdots+\cfrac{q^{-a_{m-2}}}{\left[a_{m-1}\right]_q}}}
\]
From the relationship between a rational number's continued fraction expansion and its placement on the Stern-Brocot tree, we know that $m$ must be odd. Let
\[
\frac{P(q)}{Q(q)}=[a_0]_q+\cfrac{q^{a_0}}{[a_1]_{1/q}+\cfrac{q^{-a_1}}{\cdots+\cfrac{q^{a_{m-3}}}{\left[a_{m-2}\right]_{1/q}}}}
\]
By the recurrence relation governing generalized continued fractions \cite{jones}, we find that
\[
\frac{a(q)}{b(q)}=\frac{\left[a_m\right]_{q}c(q)+q^{a_{m-1}+a_m-1}P(q)}{\left[a_m\right]_{q}d(q)+q^{a_{m-1}+a_m-1}Q(q)}
\]
and obtain
\begin{equation*}
\begin{gathered}
\left[\frac{a(1)+c(1)}{b(1)+d(1)}\right]_q=[a_0]_q+\cfrac{q^{a_0}}{[a_1]_{1/q}+\cfrac{q^{-a_1}}{\cdots+\cfrac{q^{a_{m-1}}}{\left[a_m+1\right]_{1/q}}}}=\\
\frac{\left[a_m+1\right]_{q}c(q)+q^{a_{m-1}+a_m}P(q)}{\left[a_m+1\right]_{q}d(q)+q^{a_{m-1}+a_m}Q(q)}=\frac{c(q)+qa(q)}{d(q)+qb(q)}
\end{gathered}
\end{equation*}
Since, referring to the Stern-Brocot tree, we see that $a(1)/b(1) > c(1)/d(1)$ and $\text{ord }c(q)\leq\text{ord }a(q)$ (note that our orders are only equal if $b(1) = d(1) = 1$), this is our desired result when $a(1)/b(1)$ comes after a left branch. (Additionally, since $\left[a_m+1\right]_{1/q} = \left[a_m\right]_{1/q} + q^{-a_m}/[1]_q$, the above continued fraction expansion is correct regardless of whether $(a(1)+c(1))/(b(1)+d(1))$ comes after a left or right branch.)
\newline~
\newline
Now suppose that $a(1)/b(1)$ comes after a right branch. We have
\[
\frac{a(q)}{b(q)}=\left[\frac{a(1)}{b(1)}\right]_q=[a_0]_q+\cfrac{q^{a_0}}{[a_1]_{1/q}+\cfrac{q^{-a_1}}{\cdots+\cfrac{q^{-a_{m-1}}}{\left[a_m\right]_{q}}}}
\]
and
\[
\frac{c(q)}{d(q)}=\left[\frac{c(1)}{d(1)}\right]_q=[a_0]_q+\cfrac{q^{a_0}}{[a_1]_{1/q}+\cfrac{q^{-a_1}}{\cdots+\cfrac{q^{a_{m-2}}}{\left[a_{m-1}\right]_{1/q}}}}
\]
\newpage
\noindent In this case, the relationship between a rational number's continued fraction expansion and its placement on the Stern-Brocot tree tells us that $m$ must be even. Let
\[
\frac{P(q)}{Q(q)}=[a_0]_q+\cfrac{q^{a_0}}{[a_1]_{1/q}+\cfrac{q^{-a_1}}{\cdots+\cfrac{q^{-a_{m-3}}}{\left[a_{m-2}\right]_q}}}
\]
Using the recurrence relation governing generalized continued fractions \cite{jones} once again, we find in this case that
\[
\frac{a(q)}{b(q)}=\frac{q^{a_{m-1}}\left[a_m\right]_{q}c(q)+P(q)}{q^{a_{m-1}}\left[a_m\right]_{q}d(q)+Q(q)}
\]
and obtain
\begin{equation*}
\begin{gathered}
\left[\frac{a(1)+c(1)}{b(1)+d(1)}\right]_q=[a_0]_q+\cfrac{q^{a_0}}{[a_1]_{1/q}+\cfrac{q^{-a_1}}{\cdots+\cfrac{q^{-a_{m-1}}}{\left[a_m+1\right]_q}}}=\\
\frac{q^{a_{m-1}}\left[a_m+1\right]_{q}c(q)+P(q)}{q^{a_{m-1}}\left[a_m+1\right]_{q}d(q)+Q(q)}=\frac{a(q)+q^{a_{m-1}+a_m}c(q)}{b(q)+q^{a_{m-1}+a_m}d(q)}
\end{gathered}
\end{equation*}
Since, referring to the Stern-Brocot tree, we see that $c(1)/d(1) > a(1)/b(1)$, $\text{ord }a(q)>\text{ord }c(q)$, and \(\text{ord }a(q) - \text{ord }c(q) + 1 = a_{m-1} + a_m\), this is our desired result when $a(1)/b(1)$ comes after a right branch. (Like before, we additionally see that, since $\left[a_m+1\right]_q = \left[a_m\right]_q + q^{a_m}/[1]_{1/q}$, the above continued fraction expansion is correct regardless of whether $(a(1)+c(1))/(b(1)+d(1))$ comes after a left or right branch.) We conclude that, regardless of whether $a(1)/b(1)$ comes after a left or right branch,
\[
\frac{a(q)}{b(q)} \oplus_q \frac{c(q)}{d(q)} = \left[\frac{a(1)}{b(1)}\right]_q \oplus_q \left[\frac{c(1)}{d(1)}\right]_q = \left[\frac{a(1)}{b(1)} \oplus \frac{c(1)}{d(1)}\right]_q
\]
completing the proof.
\section*{\normalfont \textbf{Appendix B} \(\mid\) Lemma from Section 6}
We aim to prove the following lemma.
\begin{lemmasixone}
Given $\left[\frac{a}{b}\right]_q=\frac{a(q)}{b(q)}$, we have that
\[
\frac{d}{dq}a(q)\Bigr|_{q=1}=\frac{1}{2}\left(D\left(\frac{a}{b}\right)b+a^{-1}-a\right)
\]
in which $D(a/b)$ is the depth of $a/b$ on the Stern-Brocot tree and $a^{-1}$ is the inverse of $a$ in the multiplicative group \(\left(\mathbb{Z}\slash b\mathbb{Z}\right)^\times\).
\end{lemmasixone}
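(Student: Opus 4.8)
The plan is to prove the lemma by induction on the depth $D(a/b)$ of $a/b$ on the Stern-Brocot tree, reusing the inductive architecture already deployed for Theorems 1.1 and 1.2. The base cases are the $q$-integers at depth $-1$ (where $a(q) = [m]_q = 1 + q + \cdots + q^{m-1}$) together with the shallowest genuine fractions needed to seed the orientation cases, all verified by direct computation. For the inductive step I would fix a node $a/b$ and a child of it, and realize that child as a weighted Farey sum $\tfrac{a(q)}{b(q)} \oplus_q \tfrac{c(q)}{d(q)}$, where $c/d$ is the second mediant parent supplied by the tree structure (the adjacent parent sits at depth one less, so the inductive hypothesis applies to it).

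The engine of the step is the numerator recursion coming from the weighted Farey sum: after canceling any common power of $q$ (as in Figures 4.5 and 4.6), the child's numerator takes the form $\mu(q)\,a(q) + \nu(q)\,c(q)$, where $\mu(q),\nu(q)$ are the $q$-power weights (e.g.\ $q^{\xi}$, or reduced factors such as $q+1$ or $q^2+q+1$) with $\mu(1) = \nu(1) = 1$, and signs absorbed into the case split. Differentiating at $q = 1$ gives
\[
\left.\frac{d}{dq}\bigl(\mu(q)\,a(q) + \nu(q)\,c(q)\bigr)\right|_{q=1} = \mu'(1)\,a + a'(1) + \nu'(1)\,c + c'(1),
\]
so the child's value is determined by $a'(1)$ and $c'(1)$ (both available by induction) together with the explicit constants $\mu'(1),\nu'(1)$ read off from the orientation. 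I would organize this into the same case split by the sign of $ad - bc$ and the local orientation that governs Sections 5 and 6.

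Two arithmetic facts make the right-hand side collapse to $\tfrac12\bigl(D\,b + a^{-1} - a\bigr)$. First, the depth satisfies $D(\text{child}) = D(a/b) + 1$ and the denominators Farey-add, $b_{\text{child}} = b + d$, producing the $D\,b$ term once the parents' contributions are combined. Second, the modular-inverse term is controlled by the identity that, because $\lvert ad - bc\rvert = 1$, we have $a_{\text{child}}^{-1} \equiv (ad-bc)\,d \pmod{b+d}$; this is exactly the case-by-case bookkeeping already recorded in Section 6 (e.g.\ $(a+c)^{-1} = b$ when $bc - ad = 1$), and it lets me rewrite the parents' inverse terms in terms of the child's data. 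A running cross-check is available: any candidate value of $a'(1)$, combined with the already-proven Theorem 1.1 via $\frac{d}{dq}[a/b]_q|_{q=1} = (a'(1)b - a\,b'(1))/b^2$, forces the companion value $b'(1)$, which must agree with the Corollary and so pins down the constants in each case.

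The hard part will be the distant-ancestor issue: the adjacent parent lies at depth $D-1$, but the other mediant parent is an ancestor at an unpredictable depth, so a naive one-step induction does not close. I would resolve this by strengthening the inductive hypothesis to an entire slab (in the sense of Section 4), carrying the identity simultaneously for all nodes in a fixed band of depths so that \emph{both} mediant parents always fall under the hypothesis. The cost is the orientation case analysis, each case being a finite, if tedious, verification of the constants $\mu'(1),\nu'(1)$ and the inverse congruence; the depth-increment and Farey-addition identities are uniform across cases, so the real labor is confined to confirming that the $q$-power weights contribute exactly the residual linear-in-$b$ correction in every orientation.
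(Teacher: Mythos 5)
Your plan is essentially the paper's own proof: Appendix B establishes the lemma by the same slab-style induction over the Stern--Brocot tree, splitting into the same four orientation cases as Section 5, differentiating the weighted Farey-sum recursion for the child's numerator, and closing each case with the recorded depth relations and exact modular-inverse identities (e.g.\ $(a+c)^{-1}=b$ when $bc-ad=1$, $a^{-1}=b-d$, $D(\mathrm{child})=D(a/b)+1$). Your uniform congruence $a_{\mathrm{child}}^{-1}\equiv(ad-bc)\,d \pmod{b+d}$ is just a repackaging of those case-recorded inverse facts, and your cross-check via Theorem 1.1 parallels the paper's use of the parents' denominator derivatives inside the induction, so the two arguments coincide in substance.
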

\begin{proof}
We set up the same four cases that we saw in the proof of the identity for the first derivative of $[a/b]_q$. In the first and second cases, $a^{-1}=b-d$, $c^{-1}=b-(n+1)d$, and $D(a/b)=D(c/d)+n-1$. In the first case, we additionally have that $(a+c)^{-1}=b$ and $D((a+c)/(b+d))=D(a/b)+1$; we see that
\begin{equation*}
\begin{gathered}
\frac{d}{dq}\left(b(q)+q^{n+1}d(q)\right)\Bigr|_{q=1}=\frac{d}{dq}b(q)\Bigr|_{q=1}+\frac{d}{dq}d(q)\Bigr|_{q=1}+(n+1)d=\\
\frac{1}{2}\left(D\left(\frac{a+c}{b+d}\right)(b+d)+(a+c)^{-1}-(a+c)\right)
\end{gathered}
\end{equation*}
as desired. In the second case, we additionally have that $(2a-c)^{-1}=b-d$ and $D((2a-c)/(2b-d))=D(a/b)+1$; we see that
\begin{equation*}
\begin{gathered}
\frac{d}{dq}\left((q+1)b(q)-q^{n}d(q)\right)\Bigr|_{q=1}=2\frac{d}{dq}b(q)\Bigr|_{q=1}+b-\frac{d}{dq}d(q)\Bigr|_{q=1}-nd=\\
\frac{1}{2}\left(D\left(\frac{2a-c}{2b-d}\right)(2b-d)+(2a-c)^{-1}-(2a-c)\right)
\end{gathered}
\end{equation*}
as desired. In the third and fourth cases, $a^{-1}=d$, $c^{-1}=3d-b$, and $D(a/b)=D(c/d)+2$. In the third case, we additionally have that $(2a-c)^{-1}=b$ and $D((2a-c)/(2b-d))=D(a/b)+1$; we see that
\begin{equation*}
\begin{gathered}
\frac{d}{dq}\left((q+1)b(q)-qd(q)\right)\Bigr|_{q=1}=2\frac{d}{dq}b(q)\Bigr|_{q=1}+b-\frac{d}{dq}d(q)\Bigr|_{q=1}-d=\\
\frac{1}{2}\left(D\left(\frac{2a-c}{2b-d}\right)(2b-d)+(2a-c)^{-1}-(2a-c)\right)
\end{gathered}
\end{equation*}
as desired. In the fourth case, we additionally have that $(a+c)^{-1}=d$ and $D((a+c)/(b+d))=D(a/b)+1$; we see that
\begin{equation*}
\begin{gathered}
\frac{d}{dq}\left(qb(q)+d(q)\right)\Bigr|_{q=1}=\frac{d}{dq}b(q)\Bigr|_{q=1}+b+\frac{d}{dq}d(q)\Bigr|_{q=1}=\\
\frac{1}{2}\left(D\left(\frac{a+c}{b+d}\right)(b+d)+(a+c)^{-1}-(a+c)\right)
\end{gathered}
\end{equation*}
as desired. This completes the proof.
\end{proof}
\newpage
\printbibliography
\end{document}